\documentclass[a4paper,oneside,11pt]{scrartcl}
\input{mypaper.sty}
\def\csat {c_{\textnormal{sat}}}

\usepackage{bm}

\begin{document}
\title{Adaptive $\mathcal{H}$-Matrix Computations in Linear Elasticity}
\author{M.\ Bauer and M.\ Bebendorf\footnote{Faculty of Mathematics, Physics and Computer Science, University of Bayreuth, 95447 Bayreuth, Germany}}
\date{\today}
\maketitle

\begin{abstract}
This article deals with the adaptive and approximative computation of the Lam\'e equations. The equations of linear elasticity are considered as boundary integral equations and solved in the setting of the boundary element
method (BEM). Using BEM, one is faced with the solution of a system of equations with a fully populated system matrix, which is in general very costly. Some adaptive algorithms based on hierarchical matrices and the adaptive cross approximation 
 are proposed. At first, an adaptive matrix-vector multiplication scheme is introduced for the efficient treatment of multiplying discretizations with given data. The strategy, to reach this aim,  is to use error estimators and techniques known from adaptivity. 
 The case of approximating the system matrix appearing in the linear system of equations with this new type of adaptivity is also discussed.
\end{abstract}
\textbf{Keywords:} matrix adaptivity, hierarchical matrices, linear elasticity, ACA, error estimation

\section{Introduction}\label{sec:one}
Almost every solid body deforms under the influence of force. The theory of linear elasticity attempts to mathematically describe deformations of bodies which return to their original shape after the application of force. The
objective here is to determine a displacement field $u(x)$ for all $x$ in a bounded domain $\Omega \subset \mathbb{R}^3$, such that for an elastic body the equilibrium equations
\begin{equation}\label{eq:equilibrium}
- \sum_{j = 1}^3 \frac{\partial}{\partial x_j} \sigma_{ij} (u,x) = f_i(x) \quad \textnormal{for } x \in \Omega, \; i = 1,2,3,
\end{equation}
hold, where a reversible, isotropic and homogeneous material behavior is assumed; see~\cite{hi04, os08, teodrescu13}. Using Hooke's law, the components of the stress tensor
\begin{equation*}
\sigma_{ij}(u,x) = \frac{E \nu}{(1+\nu)(1-2\nu)} \delta_{ij} \sum_{k = 1}^{3} e_{kk}(u,x) + \frac{E}{1+\nu}e_{ij}(u,x) 
\end{equation*} 
 $\textnormal{for } x \in \Omega, \; i = 1,2,3$, are linked to the strain tensor $e_{ij}(u,x)$ having the form
 \begin{equation*}
 e_{ij} = \frac{1}{2} \left[  \frac{\partial}{\partial x_i} u_j(x) + \frac{\partial}{\partial x_j} u_i(x) \right] \quad \textnormal{for } x \in \Omega, \; i ,j = 1,2,3,
 \end{equation*}
 under the assumption of small deformations. The numbers $E > 0$ and $\nu \in (0,1/2)$ denote the Young modulus and the Poisson ratio. After some transformation, see for example~\cite{os08}, we end up with the Navier system
 \begin{equation*}\label{eq:navierSystem}
 -\mu \Delta u(x) - (\lambda + \mu) \textnormal{grad }\textnormal{div }u(x) = f(x) \quad \textnormal{for } x \in \Omega
 \end{equation*}
 with the Lamé constants
 \begin{equation}\label{eq:lameconst}
 \lambda = \frac{E\nu}{(1+\nu)(1-2\nu)} \quad \textnormal{and} \quad \mu = \frac{E}{2(1+\nu)}.
 \end{equation}
Typical boundary conditions in solid mechanics are a mixture of Dirichlet conditions describing fixed restraints and Neumann conditions for free bearings.  For reasons of simplicity we choose one Dirichlet condition 
\begin{equation*}
\gamma_0^{\textnormal{int}} u(x) = g_D(x) \quad \textnormal{for } x \in \Gamma_D
\end{equation*}
and one Neumann boundary condition
\begin{equation*}
\gamma_1^{\textnormal{int}} u(x) = g_N(x) \quad \textnormal{for } x \in \Gamma_N,
\end{equation*}
where $\gamma_0^{\textnormal{int}}$, $\gamma_1^{\textnormal{int}}$ denote the Dirichlet and the Neumann trace operator and $\partial \Omega = \overline{\Gamma}_D \cup \overline{\Gamma}_N$ with $\Gamma_D \cap \Gamma_N = \emptyset$. Additionally, we assume a positive measure of the Dirichlet part, i.e.
\begin{equation*}
\int_{\Gamma_D} \ud s > 0,
\end{equation*}
in order to guarantee the existence of a unique solution of the considered problem.

The method of choice of the numerical solution of the above described problem is the Finite Element Method~(FEM). The resulting system matrix, or more precisely the discretized operator, is sparse. 
However, depending on the underlying grid, the matrices can quickly become very large. Another method, which is not confronted with large system matrices since only the boundary has to be discretized, is the Boundary Element Method~(BEM). 
 Initially this method was not a real alternative for FEM. Since in BEM a non-local operator is discretized,
 which leads to fully populated matrices, an efficient application of BEM was not possible. 
 
The situation changed with the progressive development towards fast boundary element methods. Methods like the fast multipole method or hier\-archical matrices ($\mathcal{H}$-matrices) reduce the complexity by approximating the discretized operator to such an extent that BEM represents an alternative to FEM.  While the fast multipole method~\cite{gr87, r85} was physically motivated and designed for specific problems, hierarchical matrices could be kept more general; see~\cite{hackbusch99, hk00}. As the name hierarchical matrix already suggests, this technique is based on a hierarchical partitioning of the discrete operator into suitable blocks.
Each of these blocks contains a low-rank approximation to the original block entry, with the whole matrix having only a logarithmic-linear stor\-age requirement. The representations are further advantageous in connection with iterative solution methods, which contain many matrix-vector multiplications. Hierarchical matrices offer the possibility to perform fast matrix-vector multiplications of logarithmic-linear complexity. 
Employing only a few of the original matrix entries, the adaptive cross approximation (ACA)~\cite{bebendorf00} has become quite popular to construct the low-rank approximation on suitable blocks. The number of matrix entries was further reduced by adding another level of adaptivity to ACA; see~\cite{bb21}. With the so-called block-adaptive cross approximation (BACA), not every block is approximated in the same way and to the same accuracy as in ACA,
but only those blocks are more accurately approximated that lead the greatest gain in accuracy of the solution.
The aim of this article is to adapt the ideas of BACA to the construction of an adaptive version of the matrix-vector multiplication.
The situation when multiplying a matrix by a vector in some sense is similar to the solution
of linear systems. When multiplying a partitioned matrix with a vector~$x$, not every approximated block has the same effect on the accuracy of the result, especially if the vector to be multiplied contains large clusters of zeros, for instance.
In order to exploit the structural differences in the vector~$x$ and to detect the best blocks, error estimators and techniques known from adaptivity are used. Note that the block-wise low-rank approximations will be successively improved without changing the hierarchical 
block structure or the grid. 

The following topics are considered in the article. Section~\ref{sec:Approx} presents basic techniques for approximation using low-rank matrices. In more detail, partitioning, cluster trees, hierarchical matrices and adaptive cross approximation are briefly discussed.
In Section~\ref{sec:amvm} we introduce an adaptive scheme for an approximate computation of the matrix-vector multiplication. Furthermore, the convergence of the investigated method and some properties of the proposed error estimator are analyzed.
Since the techniques in Sections~\ref{sec:Approx} and~\ref{sec:amvm} can be applied in many situations, we will first discuss a general case before moving on to the boundary integral approximation of the equations of linear elasticity in Section~\ref{sec:BI_LE}. Adapting the ideas of BACA to the case of linear elasticity, i.e. the Lam\'e equations, in Section~\ref{sec:adapt_BACA}, we are able 
to compute linear elasticity in a fully adaptive manner with $\mathcal{H}$-matrices. Finally, numerical examples presented in Section~\ref{sec:numerics} show a performance acceleration and a storage reduction for the numerical computation of the boundary integral formulation of linear elasticity.

\section{Approximation with Low-Rank Matrices}\label{sec:Approx}
We consider matrices $A \in \mathbb{R}^{M \times N}$ having the representation
\begin{equation*}
A = \Lambda_1 \mathcal{A} \Lambda_2^*
\end{equation*}
with a non-local linear operator $\mathcal{A}$ which depends linearly on the bivariate kernel function~$\kappa: \R^d \times \R^d \rightarrow \R^d$. The prototype for such an operator is
\begin{equation*}
(\mathcal{A}v)(x) = \int_{\Omega} \kappa(x,y) v(y) \ud\mu_y, \quad x \in \Omega,
\end{equation*}
where $\Omega \subset \R^d$ is a bounded domain with Lipschitz boundary and $\mu$ denotes the corresponding measure. The operators $\Lambda_1: L^2(\Omega) \rightarrow \R^M$ and $\Lambda_2: L^2(\Omega) \rightarrow \R^N$ are 
assumed to be linear. The adjoint operator $\Lambda_2^*: \R^N \rightarrow L^2(\Omega)$ is defined as
\begin{equation*}
(\Lambda_2^*, f)_{L^2(\Omega)} = z^T(\Lambda_2 f), \quad z \in \R^N, \quad f \in L^2(\Omega).
\end{equation*}
These two operators are used to describe different discretizations. Two examples are:
\begin{enumerate}
\item Galerkin method: Choosing functions $\varphi_i$, $i = 1,\dots,M$, and $\psi_j$, $j = 1,\dots,N$, results in the discretization 
\begin{equation*}
(\Lambda_1f)_i = \int_{\Omega} f(x) \varphi_i(x) \ud \mu_x \quad \textnormal{and} \quad (\Lambda_2 f)_j = \int_{\Omega} f(x) \psi_j(x) \ud \mu_x.
\end{equation*}
\item Collocation method: Choosing points $y_i$, $i = 1,\dots,M$, and functions $\varphi_j$, $j = 1,\dots,N$, leads to
\begin{equation*}
(\Lambda_1f)_i = f(y_i)\quad \textnormal{and} \quad (\Lambda_2 f)_j = \int_{\Omega} f(x) \varphi_j(x) \ud \mu_x.
\end{equation*}
\end{enumerate}

The approximation of $A$ with low-rank matrices can be done by approximating the bivariate function~$\kappa$ with a degenerate function~$\tilde \kappa$, i.e.\ there exists functions $u_l: X \rightarrow \R$ and $v_l: Y \rightarrow \R$, $l = 1,\dots,k$,
such that 
\begin{equation}\label{eq:approx}
\kappa(x,y) \approx \tilde \kappa(x,y) := \sum_{l = 1}^k u_l(x) v_l(y),\quad x\in X,\;y\in Y,
\end{equation}
holds for two domains $X, Y \subset \Omega$. Such an approximation automatically leads to a matrix $\tilde A$ of rank at most~$k$, since with
\begin{equation*}
a_l := \Lambda_1 u_l \in \R^M \quad \textnormal{and} \quad b_l := \Lambda_2 v_l \in \R^N, \quad l = 1,\dots,k,
\end{equation*}
it follows
\begin{equation*}
\tilde A = \Lambda_1 \tilde{\mathcal{A}} \Lambda_2^* = \Lambda_1 \sum_{l = 1}^k u_l b_l^T = \sum_{l = 1}^k (\Lambda_1 u_l) b_l^T = \sum_{l = 1}^k a_l b_l^T.
\end{equation*}
where $\tilde{\mathcal{A}}$ is defined by $(\tilde{\mathcal{A}}v)(x) := \int_\Omega \tilde \kappa(x,y) v(y) \ud \mu_y$. The reversal of the statement is  not true in general. 

A matrix $\tilde A \in \R^{M \times N}$ having rank $k$ is called low-rank matrix if the condition 
\begin{equation*}
k(M+N) < M \cdot N
\end{equation*}
is fulfilled. Using the outer product representation, i.e.\ $\tilde A = UV^T$ with matrices $U \in \R^{M \times k}$ and $V \in \R^{N \times k}$, $\tilde A$ requires $k(M+N)$ instead of $M \cdot N$ units of storage. Additionally, the multiplication 
of $\tilde A$ by a vector~$x$ can be done with $\mathcal O(k(M+N))$ arithmetic operations instead of $\mathcal O(M \cdot N)$. The best rank-$k$ approximation is given by the truncated singular value decomposition; see~\cite{ey1936}.
The advantage of the latter method over kernel approximation~\eqref{eq:approx} is its black-box nature as it relies only on the entries of~$A$. Since it has cubic complexity, the truncated singular value decomposition
cannot be used in practice.  

\subsection{Partitions and Cluster Trees}
Low-rank approximations are typically employed on suitable blocks and not for the whole matrix. In most cases the approximation of the entire matrix is not possible at all. Therefore, the matrix $A \in \R^{M \times N}$ is decomposed 
into blocks $t \times s$, $t \subset I := \{1,\dots,M\}$ and $s \subset J:= \{1,\dots,N\}$ at first.  After that each suitable block $A_{ts}$ is approximated with a low-rank matrix
\begin{equation*}
A_{ts} \approx UV^T, \quad U \in \mathbb{R}^{t \times k}, \; V \in \R^{s \times k},
\end{equation*}
where the number $k$ is small compared to $\lvert t\rvert$ and $\lvert s\rvert$. 
Let $\textnormal{supp } \Lambda_1=X_t$ and $\textnormal{supp } \Lambda_2=X_s$ be the clusters corresponding to
the index sets~$t$ and~$s$, respectively.
For example, in the case of Galerkin discretizations $X_t$ denotes the union of the supports $X_i := \textnormal{supp } \varphi_i$, $i \in t$.
A block is suitable or admissible for approximation if it satisfies the condition 
\begin{equation}\label{eq:adm}
\min \{\textnormal{diam } X_t, \textnormal{diam } X_s\} < \beta \, \textnormal{dist}(X_t, X_s)
\end{equation}
for a given $\beta > 0$.  The expression 
\[
\textnormal{diam }X = \sup_{x,y \in X} \lvert x-y\rvert \quad \textnormal{and} \quad \textnormal{dist}(X,Y) = \inf_{x \in X, y \in Y} \lvert x-y\rvert
\]
are the diameter and the distance of two bounded sets $X, Y \subset \Omega$. Condition~\eqref{eq:adm} guarantees the existence of low-rank approximations if $A$ discretizes an integral representation or the inverse of second-order elliptic partial
differential operators; see~\cite{bebendorf08}.

A partition $P$ of the matrix indices $I \times J$ consisting of admissible blocks or blocks which are small can be found as the leaves of a block-cluster tree~$T_{I\times J}$; see~\cite{hk00, bebendorf08}. This quad-tree can be constructed from two separate binary cluster trees~$T_I$ and~$T_J$ with roots~$I$ and~$J$, respectively.
The sons $S_I(t) = \{ t', t''\} \subset T_I$ of each node $t \in T_I$ (or $s \in T_J$), if they exist, satisfy $t' \cup t''=t$ and $t' \cap t'' = \emptyset$. The leaves of~$T_I$ are gathered in the set $\mathcal{L}(T_I) := \{t \in T_I \, : \, S_I(t) = \emptyset\}$. Applying the mapping~$S_I$ recursively,
a cluster tree~$T_I$ can be constructed consisting of several levels $T_I^{(l)}$, $l = 0,\dots,L$, where $L$ denotes the depth of the tree.
Once both cluster trees $T_I$ and $T_J$ have been generated, the block-cluster tree $T_{I \times J}$ can be constructed by recursively subdividing $I\times J$ by
following the trees $T_I$ for the rows and $T_J$ for the columns until either~\eqref{eq:adm} is satisfied or the clusters cannot be subdivided further.
As a result, the partition~$P$ consists of admissible blocks $P_{\textnormal{adm}}$ and non-admissible blocks $P_{\textnormal{non-adm}}$, i.e.
\[
P := \mathcal{L}(T_{I \times J}) = P_{\textnormal{adm}} \cup P_{\textnormal{non-adm}} .
\]
The \textit{sparsity constant} $c_{\textnormal{sp}}$ (see~\cite{gh03}) is defined as
\[
 c_{\textnormal{sp}} := \max\left\{ \max_{t \in T_{I}} c_{\textnormal{sp}}^{r}(t),\,
 \max_{s \in T_{J}} c_{\textnormal{sp}}^{c}(s)\right\},
\]
where
\[
  c_{\textnormal{sp}}^{r}(t) := \lvert\{ s \subset J : t \times s \in  P\}\rvert,
   \]
denotes the maximum number of blocks $t \times s$ contained in $P$ for a given cluster $t \in T_{I}$ and
 \[
  c_{\textnormal{sp}}^{c}(s) :=\lvert\{ t \subset I  : t \times s \in P\}\rvert
 \]
the maximum number of blocks $t \times s \in P$ for a cluster $s \in T_{J}$. We refer the reader to~\cite{bebendorf08} for more details on the construction of cluster trees.

\subsection{Hierarchical Matrices and Adaptive Cross Approximation}
In view of the construction of the partition $P$, the set of $\mathcal H$-matrices with blockwise rank $k$ is defined by
\begin{equation*}
\mathcal H(P, k) := \{ M \in \R^{I \times J} \, : \, \textnormal{rank } M_b \leq k \textnormal{ for all } b \in P\},
\end{equation*}
see~\cite{hackbusch99, hk00}. A great advantage of hierarchical matrices is the efficient matrix-vector multiplication. The product of an $\mathcal H$-matrix with a vector can be computed in logarithmic-linear time; see~\cite{hackbusch99,hk00,bebendorf08}.

Meanwhile many different methods exist to generate low-rank approximations on admissible matrix blocks. Replacing the kernel function of the integral operator by truncated kernel expansions as it is described in the beginning of Sect.~\ref{eq:approx} of this article
is a common analytical approach. Examples for such expansions are the multipole expansion~\cite{r85, gr87} or interpolating polynomials. Other approaches such as the algebraic pseudo-skeleton method~\cite{gtz97} work directly on the entries of the
considered block. In this article we rely on the adaptive cross
approximation~(ACA) (see~\cite{bebendorf00}) which requires only few of the original entries
to construct the low-rank approximation. Non-admissible blocks cannot be approximated. However, they are small and can be computed entry by entry.

In the following we concentrate on a single admissible block~$A_{ts} \in \R^{t \times s}$ of~$A$.
The following Algorithm~\ref{alg:ACA} (see~\cite{bebendorf00,br03}) constructs two sequences $\{u_k\} \subset \R^{t}$ and
$\{v_k\}\subset \R^s$. The matrix 
\begin{equation*}
S_k := \sum_{l = 1}^k u_l v_l^T
\end{equation*}  
has rank at most~$k$. Given $\eps_{\textnormal{ACA}} > 0$, the remainder $R_k := A_{ts} - S_k$ has relative accuracy 
\begin{equation*}
\norm{R_k}_F \leq \eps_{\textnormal{ACA}} \norm{A_{ts}}_F,
\end{equation*} 
where $\norm{\cdot}_F$ denotes the Frobenius norm, such that $S_k$ can be used as an approximation of $A_{ts}$.
\begin{algorithm}[htb]
\caption{Adaptive Cross Approximation (ACA)}\label{alg:ACA}
\begin{algorithmic}[0]
\State Let $k = 1$; $Z = \emptyset$; $\eps_\textnormal{ACA}> 0$
\Repeat
\State find $i_k$ by some rule
\State $\tilde{v}_k := A_{i_k, s}$
\For {$l = 1,\ldots,k-1$} $\tilde{v}_k := \tilde{v}_k - (u_{l})_{i_k}v_{l}$ \EndFor
\State $Z := Z \cup \{i_k\}$
\If{$\tilde{v}_k$ does not vanish} 
\State $j_k := \textnormal{argmax}_{j\in s} \lvert(\tilde{v}_k)_{j}\rvert$; $v_k := (\tilde{v}_k)_{j_k}^{-1}\tilde{v}_k$ 
\State $u_k := A_{t, j_k}$
\For {$l = 1,\ldots, k-1$} 
$u_k := u_k - (v_{l})_{j_k} u_{l}$\EndFor
\State $k := k + 1$
\EndIf
\Until $\norm{u_{k+1}}_2\norm{v_{k+1}}_2 \leq \frac{\eps_\textnormal{ACA}(1-\beta)}{1+\eps_\textnormal{ACA}} \norm{S_k}_F$ or $Z = t$
\end{algorithmic}
\end{algorithm}

It is easily seen that the vectors $u_k$ and $v_k$ have the representation 
\begin{equation*}
u_k = (R_{k-1})_{t j_k} \quad \textnormal{and} \quad v_k = \frac{1}{(R_{k-1})_{i_k j_k}}  (R_{k-1})_{i_k s}.
\end{equation*}
When selecting the row indices $i_k$, it must be ensured that the Vandermonde matrix corresponding to the system in which the approximation error is to be estimated is not singular; cf.~\cite{bebendorf08}.
In the case of kernel functions of the form $\kappa (x,y) = \xi(x) \zeta(y) \lvert x-y\rvert^{-\alpha}$ with $\alpha > 0$ and $\xi$ and $\zeta$ depending 
on only one of the variables~$x$ and~$y$, respectively, no attention has to be paid to the choice of the
row indices, because in this case a system of functions can be specified which leads to a non-singular
Vandermonde matrix; see~\cite{bbf21}.

The vanishing rows of the remainders $R_k$ are gathered in the set~$Z$. If the $i_k$-th row of $R_k$ is nonzero and therefore used as $v_k$, it is also included in $Z$ as the $i_k$-th row of the next remainder $R_{k+1}$ vanishes.
The number of elements of $Z$ usually depends logarithmically on the desired blockwise precision $\eps_{\textnormal{ACA}}$; see~\cite{bebendorf08}.
In the following, $\lvert Z\rvert$ is to be managed for each block in the case of the matrix-vector multiplication by an adaptive algorithm,
where the quality of the approximation of the respective block of~$A$ is adapted to the structure of the vector~$x$ to be multiplied rather than to the blockwise accuracy~$\eps_{\textnormal{ACA}}$.

\section{The Adaptive Matrix-Vector Multiplication} \label{sec:amvm}
The goal of this section is to introduce an approximate and adaptive algorithm for the multiplication of a matrix $A \in \R^{M \times N}$ by a vector $x \in \R^N$, i.e.
\[
b = Ax,
\]
where $A$ is the discretization of a non-local operator and $b$ denotes the resul-ting vector. Since $A$ is fully populated, the usual way of treating such problems in our case is to approximate the system matrix by hierarchical matrices at first and then to multiply the approximation of~$A$ by the vector~$x$. 
As a result of the construction of the approximation by ACA, redundant and unnecessary information can arise for the simple reason that ACA treats each matrix block independently
such that a prescribed accuracy is guaranteed. 
In order to avoid the generation of such information, we follow an adaptive strategy.
Instead of the previous approach of generating a single hierarchical matrix approximation of~$A$, we construct a sequence of approximations~$A_k$ and the resulting vectors~$b_k:=A_kx$.
The individual approximations are steered using a residual error estimator based on the $h$--$h/2$ strategy~\cite{fp08} and the Dörfler marking technique~\cite{doerfler96}.
Note that in contrast to the conventional field of application of such error estimators, no refinement of the geometry or the grid is considered here.
While the low-rank approximations of the individual blocks are successively improved, the underlying grid structure and the underlying block-cluster tree are not changed at any time.

Of course the above procedure looks much more complex than multiplying a single approximation of~$A$ by the vector~$x$.
The approach here aims to exploit properties of the vector~$x$ in combination with properties of~$A$.
As an example, consider the extreme case that $x=0$. Then the adaptive approach detects that there is no use in
computing an approximation of~$A$ with accuracy~$\eps_\textnormal{ACA}$, while the usual approach would first approximate every block with this accuracy and then perform the multiplication.
Depending on the combination of~$A$ and~$x$, we expect improved memory requirements and computational time.

A reliable estimate of the error requires the existence of a more accurate approximation~$\hat A_k$  of~$A$  than $A_k$, i.e., we assume that the saturation assumption 
\begin{equation} \label{eq:AMVM_satass}
\norm{\hat b_k - b}_2 \leq \csat \norm{b_k - b}_2
\end{equation}
for some $0<\csat< 1$ is fulfilled, where $\hat b_k := \hat A_k x$.  A natural choice (so-called look-ahead approximation) for $\hat A_k$ is the improved approximation that results from $A_k$ by applying a fixed number of additional ACA steps to each admissible 
block and by setting $(\hat A_k)_{ts} = A_{ts}$ for all other non-admissible blocks $t \times s  \in P_{\textnormal{non-adm}}$.
Using the error estimator
\[
\gamma_k := \norm{b_k - \hat b_k}_2 = \norm{\sum_{t\times s \in P} (A_k-\hat A_k)_{ts}x_s}_2,
\]
which is localized with respect to blocks in $P$, the algorithm for the adaptive matrix-vector multiplication is summarized in Algorithm~\ref{alg:AMVM}.
Following the ideas above, we thus combine the assembly of the discretized non-local operator with the simultaneous computation of the matrix-vector multiplication. Figure~\ref{fig:schematic} shows a schematic illustration of the procedure.
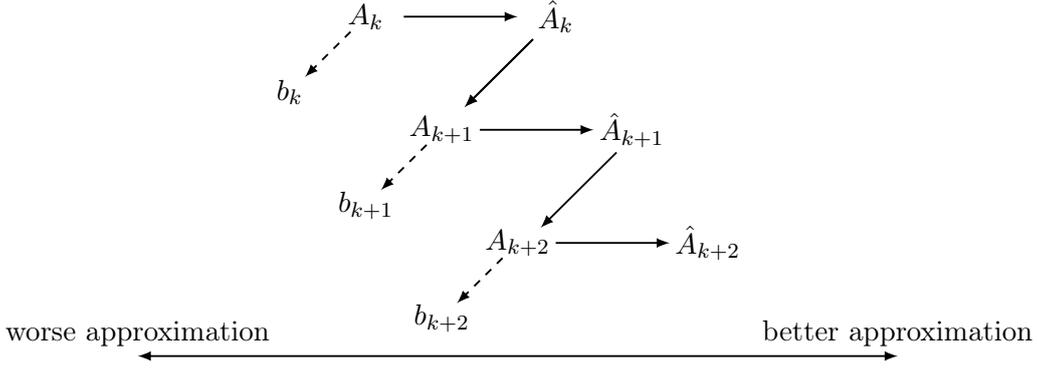
\begin{figure}[htb] \centering 
\begin{tikzpicture}
\node[rectangle] at (0, 0) {$A_{k}$};
\node[rectangle] at (-1,-1) {$b_{k}$};
\draw[line width=0.7pt,dashed, ->, >=latex] (-0.2,-0.2)--(-0.8,-0.8);
\draw[line width=0.7pt, ->, >=latex] (0.5,0)--(2.0,0);
\draw[line width=0.7pt, ->, >=latex] (2.2,-0.3)--(1.3,-1.2);
\node[rectangle] at (2.5, 0) {$\hat{A}_{k}$};
\node[rectangle] at (1.0, -1.5) {$A_{k+1}$};
\node[rectangle] at (0.0, -2.5) {$b_{k+1}$};
\draw[line width=0.7pt,dashed, ->, >=latex] (0.8, -1.7)--(0.2,-2.3);
\draw[line width=0.7pt, ->, >=latex] (1.5,-1.5)--(3.0,-1.5);
\node[rectangle] at (3.5, -1.5) {$\hat{A}_{k+1}$};
\draw[line width=0.7pt, ->, >=latex] (2.2,-0.3)--(1.3,-1.2);
\node[rectangle] at (2.0, -3) {$A_{k+2}$};
\node[rectangle] at (1.0, -4) {$b_{k+2}$};
\draw[line width=0.7pt,dashed, ->, >=latex] (1.8, -3.2)--(1.2,-3.8);
\draw[line width=0.7pt, ->, >=latex] (2.5,-3)--(4.0,-3);
\node[rectangle] at (4.5, -3) {$\hat{A}_{k+2}$};
\draw[line width=0.7pt, ->, >=latex] (3.3,-1.8)--(2.3,-2.8);
\draw[line width=0.7pt, <->, >=latex] (-3,-4.5)--(7,-4.5);
\node[rectangle] at (-3.0, -4.2) {worse approximation};
\node[rectangle] at (7.0, -4.2) {better approximation};
\end{tikzpicture}
   \caption{Schematic illustration of the procedure.}
   \label{fig:schematic}
   \end{figure}

\begin{algorithm}[htb]
\caption{Adaptive matrix-vector multiplication (AMVM)}\label{alg:AMVM}
\begin{algorithmic}[0]
\State 
\begin{enumerate}
 \item Start with a coarse $\Hm$-matrix approximation $A_{0}$ of $A$ and set $k=0$.
 \item Compute $b_k = A_k x$ and $\hat b_k = \hat A_k x$.
 \item 
 \begin{itemize} 
 \item[a)]Given $0<\theta<1$, find a set of marked blocks $P_k\subset P$ with minimal cardinality such that
 \begin{equation}\label{Dmark}
 \gamma_k-\gamma_k(P_k)\geq \theta\gamma_k,
 \end{equation}
 where $\gamma_k(Q):=\norm{\sum_{t\times s \in P\setminus Q} (A_k-\hat A_k)_{ts}x_s}_2$ and
 $\gamma_k=\gamma_k(\emptyset)=\norm{b_k-\hat b_k}_2$.
 \item[b)] Use the following strategy to construct $P_k$:
 \begin{itemize}
 \item[(i)] Sort the errors $\lvert(b_k - \hat b_k)_i\rvert$, $ i = 1,\ldots,M$, in decreasing order.
 \item[(ii)] Go through the ordered errors step by step starting from the top and detect the corresponding blocks in the considered row $i$.
 \item[(iii)] Add every block $t \times s$ to $P_k$ for which $\lvert[(A_k - \hat A_k)_{ts}x_s]_{i}\rvert \geq (1-\theta)(c_{\textnormal{sp}}MN)^{-1/2} \gamma_k$ holds.
 \item[(iv)] Extend $P_k$ according to (ii) and (iii) as long as condition~\eqref{Dmark} is not fulfilled. 
  \end{itemize}
 \end {itemize}
 
 \item 
 Let
 \[
   A_{k+1}=\begin{cases} (\hat A_k)_b, & b\in P_k,\\
         (A_k)_b, & b\in P\setminus P_k.
   \end{cases}
   \]
  \item If $\gamma_{k+1}>\eps_\textnormal{AMVM}$ increment $k$ and go to 2.
\end{enumerate}
\end{algorithmic}
\end{algorithm}

At first glance Algorithm~\ref{alg:AMVM} uses two $\Hm$-matrices $A_k$ and $\hat A_k$. Since they are strongly related to each other, it is actually sufficient to store only the more accurate approximation $\hat A_k$. 
 Due to the selection criteria of $P_k$ in Algorithm~\ref{alg:AMVM}, clusters of zero entries in the vector $x$ have the consequence that the associated blocks do not have to be approximated at all.
 Hence, this approach allows to take into account the structure of the vector $x$ when approximating~$A$.
 
 \begin{remark}
Notice that the previous algorithm terminates either if in step~3~b)~(iv) condition~\eqref{Dmark} is satisfied
or if the list of blocks has come to its end. In this case also \eqref{Dmark} is valid, because
the condition used in step~3~b)~iii) implies 
$\lvert[(A_k - \hat A_k)_{ts}x_s]_{i}\rvert \leq (1-\theta)(c_{\textnormal{sp}}MN)^{-1/2} \gamma_k$ for all blocks $t\times s\in P \setminus P_k$
and thus
\begin{align*}
\gamma_k(P_k) &= \norm{\sum\limits_{t \times s \in P\setminus P_k} (A_k - \hat A_k)_{ts} x_s}_2 
 = \left( \sum_{i = 1}^M \lvert \left[ \sum_{t \times s \in P\setminus P_k} (A_k - \hat A_k)_{ts} x_s \right]_i \rvert^2\right)^{1/2} \\
& \leq \left( \sum_{i = 1}^M c_{\textnormal{sp}} \sum_{t \times s \in P\setminus P_k} \lvert \left[  (A_k - \hat A_k)_{ts} x_s \right]_i \rvert^2\right)^{1/2} \\
& \leq \left( \sum_{i = 1}^M \sum_{t \times s \in P\setminus P_k,\,i\in t} (1-\theta)^2 (MN)^{-1} \gamma_k^2 \right)^{1/2} \leq (1-\theta) \gamma_k.
\end{align*}

 \end{remark}

The newly introduced algorithm will be examined in more detail in the next steps. First, we consider the reliability and the efficiency as two basic characteristics of the error estimator. 
\begin{lemma}\label{lem:rel}
 Let assumption~\eqref{eq:AMVM_satass} be valid. Then $\gamma_k$ is efficient and reliable, i.e., it holds 
 \[
  c_{\textnormal{eff}} \gamma_k \leq \norm{b_k - b}_2 \leq c_{\textnormal{rel}} \gamma_k,
 \]
 where $c_{\textnormal{eff}} := 1/(1+\csat)$ and $c_{\textnormal{rel}} := 1/(1-\csat)$.
\end{lemma}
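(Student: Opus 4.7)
The proof is the standard two-sided bound for an $h$--$h/2$ type residual error estimator, so the plan is to get both inequalities from the triangle inequality combined with the saturation assumption~\eqref{eq:AMVM_satass}. There are no technical obstacles; the only care needed is to set up the triangle inequality consistently around the intermediate vector $\hat b_k$, which is what makes $\gamma_k = \|b_k - \hat b_k\|_2$ a useful surrogate for the unknown error $\|b_k - b\|_2$.

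For the reliability bound, I would insert $\hat b_k$ between $b_k$ and $b$ and apply the triangle inequality:
\[
\|b_k - b\|_2 \leq \|b_k - \hat b_k\|_2 + \|\hat b_k - b\|_2 = \gamma_k + \|\hat b_k - b\|_2.
\]
Then using \eqref{eq:AMVM_satass} to bound $\|\hat b_k - b\|_2 \leq \csat \|b_k - b\|_2$ and absorbing the resulting term on the left-hand side gives $(1-\csat)\|b_k - b\|_2 \leq \gamma_k$, i.e.\ the upper bound with constant $c_{\textnormal{rel}} = 1/(1-\csat)$. This step is where the assumption $\csat < 1$ is essential, so that division is legal.

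For the efficiency bound I would do the symmetric thing, applying the triangle inequality the other way around to $\gamma_k$:
\[
\gamma_k = \|b_k - \hat b_k\|_2 \leq \|b_k - b\|_2 + \|b - \hat b_k\|_2 \leq (1+\csat)\|b_k - b\|_2,
\]
where the last step again uses~\eqref{eq:AMVM_satass}. Dividing by $1+\csat$ yields the lower bound with $c_{\textnormal{eff}} = 1/(1+\csat)$.

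Since both inequalities are one-line triangle-inequality arguments, there is no real main obstacle. The only subtlety to keep in mind is that the saturation assumption is an assumption, not a theorem; its plausibility rests on choosing $\hat A_k$ to be a genuinely more accurate look-ahead approximation (a few extra ACA steps per admissible block), but that has already been discussed in the paragraph preceding the lemma and need not reappear in the proof.
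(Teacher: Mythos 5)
Your proof is correct and is essentially identical to the paper's: both directions follow from the triangle inequality around $\hat b_k$ combined with the saturation assumption, absorbing the $\csat\norm{b_k-b}_2$ term for reliability. No differences worth noting.
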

\begin{proof}
With the saturation assumption it follows 
\[
\norm{b_k - b}_2 \leq \norm{b_k - \hat b_k}_2 + \norm{\hat b_k + b}_2 \leq \gamma_k + \csat \norm{b_k - b}_2
\]
and thus
\[
\norm{b_k - b}_2 \leq c_{\textnormal{rel}} \gamma_k, 
\]
which proves the reliability of the estimator $\gamma_k$. 
Using again the saturation assumption, we obtain 
\[
\gamma_k = \norm{b_k - \hat b_k}_2 \leq \norm{b_k - b}_2 + \norm{b - \hat b_k}_2 \leq (1+\csat) \norm{b_k - b}_2
\]
and thus
\[
c_{\textnormal{eff}} \gamma_k \leq \norm{b_k - b}_2.
\]
\end{proof}

The next property of the estimator $\gamma_k$ which has to be investigated is the estimator convergence. In order to do this, we must first examine the behavior of the error $ \hat e_k := \norm{\hat b_k - \hat b_{k+1}}_2$, where $\hat b_k = \hat A_k x$.
\begin{lemma}\label{lem:convHatError}
  The error $\hat e_k $ converges to zero for $k \rightarrow \infty$.
\end{lemma}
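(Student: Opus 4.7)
The plan is to show that $\hat A_k$ stabilizes after finitely many iterations, which forces $\hat e_k=0$ for all sufficiently large $k$ and hence $\hat e_k\to 0$. The key structural fact is that on each admissible block $b=t\times s$, the ACA outer-product expansion can grow only up to the intrinsic rank $r^{b}_{\max}\leq \min\{\lvert t\rvert,\lvert s\rvert\}$, so only boundedly many nontrivial refinement steps can ever accumulate on a single block.

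For each admissible block $b\in P_{\textnormal{adm}}$ I would introduce the counter $r^{b}_k$ equal to the rank of $(A_k)_b$, with $r^{b}_0$ the rank of the initial coarse approximation. Combining step~4 of Algorithm~\ref{alg:AMVM} with the definition of the look-ahead (a fixed number $\Delta$ of extra ACA steps per admissible block producing $\hat A_k$ from $A_k$), one obtains $r^{b}_{k+1}=r^{b}_k+\Delta$ whenever $b\in P_k$, and $r^{b}_{k+1}=r^{b}_k$ otherwise. Thus $(r^{b}_k)_k$ is a nondecreasing integer sequence bounded above by $r^{b}_{\max}$, so it stabilizes at some value $r^{b}_{*}$ after a finite index $K_b$.

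Next I would show that $(\hat A_k)_b$ itself is eventually constant. If $r^{b}_{*}=r^{b}_{\max}$, then for $k\geq K_b$ one has $(A_k)_b=A_b$, and hence $(\hat A_k)_b=A_b$ as well, since additional ACA steps applied to an already exact approximation produce zero contributions. If $r^{b}_{*}<r^{b}_{\max}$, then $b\notin P_k$ for every $k\geq K_b$, so $(A_{k+1})_b=(A_k)_b$; by the deterministic pivoting rule of Algorithm~\ref{alg:ACA}, the $\Delta$ look-ahead steps started from the same base produce identical outer products, giving $(\hat A_{k+1})_b=(\hat A_k)_b$. Since $P$ is finite and non-admissible blocks are exact throughout, setting $K:=\max_{b\in P_{\textnormal{adm}}}K_b$ yields $\hat A_{k+1}=\hat A_k$ for every $k\geq K$, so $\hat e_k=\|(\hat A_k-\hat A_{k+1})x\|_2=0$ for all such $k$.

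The only point requiring genuine care, and thus the main potential obstacle, is the identification of $(\hat A_{k+1})_b$ with $(\hat A_k)_b$ on unmarked blocks. This relies on the pivoting in Algorithm~\ref{alg:ACA} being deterministic, so that $\Delta$ additional ACA iterations started from the same approximation and the same set $Z$ of exhausted rows produce exactly the same outer products. If this were not the case, one would have to replace the exact equality by a quantitative ACA-convergence estimate on $\|(\hat A_{k+1})_b-(\hat A_k)_b\|_F$ and combine it with the rank-boundedness of $(r^{b}_k)_k$ to squeeze the difference to zero; either way, the overall architecture of the argument is the same blockwise monotone-bounded-sequence observation.
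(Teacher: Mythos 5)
Your argument is correct, but it is genuinely different from the one in the paper. The paper proves the lemma in two lines: it bounds $\hat e_k \leq \bigl(\norm{\hat R_k}_2 + \norm{\hat R_{k+1}}_2\bigr)\norm{x}_2$ with the global remainders $\hat R_k := A - \hat A_k$ and then asserts that $\norm{\hat R_k}_2 \to 0$, i.e.\ that the look-ahead approximations converge to the exact matrix. You instead show that the sequence $\hat A_k$ is \emph{eventually stationary}: the per-block step/rank counters are nondecreasing integers bounded by the block dimensions, hence stabilize, and after stabilization each block is either reproduced exactly or never marked again, so that (by determinism of the ACA pivoting) its look-ahead no longer changes; thus $\hat e_k = 0$ for all large $k$. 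Your route is more elementary and arguably more robust: the paper's assertion $\norm{\hat R_k}_2 \to 0$ is delicate for an adaptive scheme that refines only marked blocks (an unmarked block's remainder never shrinks), whereas your stabilization argument sidesteps this entirely and even yields the stronger conclusion of eventual exactness of $\hat e_k$. What you give up is any quantitative decay rate, which the paper's estimate would provide if the remainders decay (e.g.\ exponentially in the accumulated ACA rank); your stabilization index $K$ can in principle be as large as full rank on every block. Two minor bookkeeping points: a marked block's rank may increase by fewer than $\Delta$ (zero rows in ACA enlarge $Z$ without adding rank, and the blockwise stopping criterion may already have fired), so your dichotomy should be phrased as ``exact, or no longer changed by marking, or never marked'' --- but in every case $(\hat A_k)_b$ is eventually constant, so the conclusion stands.
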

\begin{proof}
 For $\hat e_k$ it holds
 \begin{align*}
  \hat e_k = \norm{(\hat A_k  - \hat A_{k+1}) x}_2 &\leq \norm{\hat A_k - \hat A_{k+1}}_2 \norm{x}_2 
  = \norm{\hat{R}_k - \hat{R}_{k+1}}_2 \norm{x}_2 \\
  &\leq \left( \norm{\hat{R}_k}_2 + \norm{\hat{R}_{k+1}}_2\right) \norm{x}_2,
 \end{align*}
 where $\hat R_k:=A-\hat A_k$ and $\hat R_{k+1}:=A-\hat A_{k+1}$ denote the remainders of the whole matrix at this point. Since the remainders $\norm{\hat{R}_k}$ and $\norm{\hat{R}_{k+1}}$ converge to zero for $k \rightarrow \infty$, the error $\hat e_k$ converges to zero for $k \rightarrow \infty$.
\end{proof}
The convergence of the error estimator $\gamma_k$ can be proven via an estimator reduction principle. 
\begin{lemma}[estimator reduction]\label{lem:estred}
Let $s > 1$ and $1 - \frac{1}{\sqrt{s}} < \theta < 1$ be given. Then it holds that 
\[
\gamma_{k+1}^2 \leq c_1 \gamma_{k}^2 + c_2 \hat e_k^2,
\]
where $c_1  = 1/s < 1$ and $c_2 = [1-s(1-\theta)^2]^{-1}$. Furthermore, 
$\lim_{k \to \infty} \gamma_k = 0$.
\end{lemma}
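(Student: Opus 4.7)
The plan is to reduce the one-step bound to an $\ell_2$ triangle inequality pivoting around the intermediate vector $\hat b_k$, to identify the two resulting terms with $(1-\theta)\gamma_k$ (via Dörfler marking) and $\hat e_k$, and then to apply Young's inequality to split the square into the two claimed pieces. Convergence then follows by iterating this reduction together with Lemma~\ref{lem:convHatError}.

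First I would exploit the block definition of $A_{k+1}$ from step~4 of Algorithm~\ref{alg:AMVM}: on every $b\in P_k$ we have $(A_{k+1})_b=(\hat A_k)_b$, so $A_{k+1}-\hat A_k$ vanishes there, while on $b\in P\setminus P_k$ it coincides with $A_k-\hat A_k$. This immediately gives
\[
\norm{b_{k+1}-\hat b_k}_2
=\norm{\sum_{t\times s\in P\setminus P_k}(A_k-\hat A_k)_{ts}x_s}_2
=\gamma_k(P_k),
\]
and the Dörfler condition~\eqref{Dmark} rewrites as $\gamma_k(P_k)\leq (1-\theta)\gamma_k$. Combining this with the identity $\norm{\hat b_k-\hat b_{k+1}}_2=\hat e_k$ and the triangle inequality yields the linear bound $\gamma_{k+1}\leq (1-\theta)\gamma_k+\hat e_k$.

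Next I would square this estimate using Young's inequality $(a+b)^2\leq (1+\delta)a^2+(1+\delta^{-1})b^2$, choosing $\delta>0$ so that $(1+\delta)(1-\theta)^2=1/s$. The requirement $\delta>0$ is equivalent to $(1-\theta)^2<1/s$, which is exactly the hypothesis $\theta>1-1/\sqrt{s}$, and a short computation then shows $1+\delta^{-1}=[1-s(1-\theta)^2]^{-1}=c_2$. This produces the estimator reduction $\gamma_{k+1}^2\leq c_1\gamma_k^2+c_2\hat e_k^2$ with the stated constants.

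For the asymptotic statement $\gamma_k\to 0$, since $c_1<1$ and $\hat e_k\to 0$ by Lemma~\ref{lem:convHatError}, I would argue that for any $\varepsilon>0$ there exists $K$ with $\hat e_k^2\leq\varepsilon$ for all $k\geq K$. A one-line telescoping of the reduction inequality then gives $\gamma_{K+n}^2\leq c_1^n\gamma_K^2+c_2\varepsilon/(1-c_1)$ by summing the geometric series in $c_1$; letting $n\to\infty$ and then $\varepsilon\to 0$ yields the claim. The only step needing any real care is the identification of $b_{k+1}-\hat b_k$ via the blockwise definition of $A_{k+1}$ together with the choice of the Young parameter forced by the target constant $c_1=1/s$; everything else is routine bookkeeping.
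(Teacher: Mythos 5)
Your proposal is correct and follows essentially the same route as the paper: the blockwise identification $\norm{b_{k+1}-\hat b_k}_2=\gamma_k(P_k)\leq(1-\theta)\gamma_k$, the triangle inequality around $\hat b_k$, and Young's inequality with $\delta$ chosen so that $(1+\delta)(1-\theta)^2=1/s$. The only (immaterial) difference is in the tail: you conclude $\gamma_k\to 0$ by an $\varepsilon$--$K$ telescoping of the reduction inequality, whereas the paper first establishes boundedness and then applies the reduction to $\Gamma=\limsup_{k\to\infty}\gamma_k^2$ to get $\Gamma\leq c_1\Gamma$; both are standard and equivalent.
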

\begin{proof}
We have a closer look at the error estimator $\gamma_{k+1}$. With $\delta > 0$ and Young's inequality it follows
  \begin{align*}
  \gamma_{k+1}^2 &= \norm{b_{k+1} - \hat b_{k+1}}_2^2=  \norm{b_{k+1} - \hat b_k + \hat b_k - \hat b_{k+1}}_2^2 \\
  &\leq \left( \norm{b_{k+1}-\hat b_k}_2 + \norm{\hat b_k - \hat b_{k+1}}_2\right)^2\\
  &\leq (1+\delta) \underbrace{\norm{b_{k+1} - \hat b_{k}}^2_2}_{=: e^2} + (1+1/\delta) \hat e_k^2.
 \end{align*}
 If we split up $e$ into the marked and non-marked blocks, from \eqref{Dmark} we obtain the following estimator reduction:
 \begin{align*}
  e = \norm{b_{k+1} - \hat b_k}_2 &= \norm{\sum_{t\times s\in P} (A_{k+1}-\hat A_k)_{ts}x_s}_2 \\
  &= \norm{\sum_{t\times s\in P_k} (A_{k+1}-\hat A_k)_{ts}x_s+\sum_{t\times s\in P\setminus P_k} (A_{k+1}-\hat A_k)_{ts}x_s}_2\\
  &= \norm{\sum_{t\times s\in P\setminus P_k} (A_k-\hat A_k)_{ts}x_s}_2=\gamma_k(P_k)\leq (1-\theta)\gamma_k.
 \end{align*}
With the choice $\delta = \frac{1- s(1-\theta)^2}{s(1-\theta)^2}$ we get
\begin{align*}
 \gamma_{k+1}^2 &\leq (1+\delta) (1-\theta)^2 \gamma_k^2 + (1+1/\delta) \hat e_k^2 \\
 & = \left( 1 + \frac{1 -s(1-\theta)^2}{s(1-\theta)^2} \right) (1-\theta)^2 \gamma_k^2 + \left( 1 + \frac{s(1-\theta)^2}{1 - s(1-\theta)^2}\right) \hat e_k^2 \\
 & = \frac{1}{s} \gamma_k^2 + \frac{1}{1-s(1-\theta)^2} \hat e_k^2.
\end{align*}

The second part of the assertion is proved with Lemma~\ref{lem:convHatError} and the error estimator reduction principle introduced in~\cite{afl12}. Let $\hat E > 0$ be a number satisfying $\hat e_k \leq \hat E$ for all $k$. The estimator reduction 
principle leads to
\begin{align*}
\gamma_{k+1}^2 &\leq c_1 \gamma_k^2 + c_2 \hat e_k^2 \leq c_1 ( c_1 \gamma_{k-1}^2 + c_2 \hat e_{k-1}^2) + c_2 \hat e_k^2\\
& \leq \ldots \leq c_1^{k+1} \gamma_0^2 + c_2 \sum_{i = 0}^k c_1^{k-i} \hat e_k^2 \\
& \leq c_1^{k+1} \gamma_0^2 + c_2 \hat E \sum_{l = 0}^k c_1^l \leq \gamma_0^2 + \frac{c_2 \hat E}{1-c_1}.
\end{align*}
Accordingly, the sequence $\{\gamma_k\}_{k \in \N_0}$ is bounded and we are able to define $\Gamma := \limsup_{k \rightarrow \infty} \gamma_k^2$. Using the estimator reduction principle once more yields
\[
\Gamma = \limsup_{k \to\infty} \gamma_{k+1}^2 \leq c_1 \limsup_{k \to \infty} \gamma_k^2 + c_2 \underbrace{\limsup_{k \rightarrow \infty} \hat e_k^2}_{= 0} = c_1 \Gamma.
\]
Thus $\Gamma = 0$ and it follows
\[
0 \leq \liminf_{k \to \infty} \gamma_k \leq \limsup_{k \to \infty} \gamma_k = \Gamma=0,
\]
which shows
\[
\lim_{k \rightarrow \infty} \gamma_k = 0.
\]
\end{proof}
Exploiting the reliability of the error estimator, the convergence of the adaptive matrix-vector multiplication can also be shown.
\begin{lemma}[estimator convergence] Let the requirements of Lemma~\ref{lem:convHatError} and Lemma~\ref{lem:estred} be valid. Then, the error $\norm{b_k - b}_2$ of the sequence $\{b_k\}_{k \in \N}$ constructed by Algorithm~\ref{alg:AMVM}
 converges to zero.
\end{lemma}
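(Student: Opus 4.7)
The plan is to observe that the result follows almost immediately by chaining the two previous lemmas: Lemma~\ref{lem:estred} already delivers $\lim_{k \to \infty} \gamma_k = 0$, and Lemma~\ref{lem:rel} delivers the reliability bound $\|b_k - b\|_2 \leq c_{\textnormal{rel}} \gamma_k$. Composing these two facts gives the claim directly.

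More concretely, I would first invoke the estimator reduction principle of Lemma~\ref{lem:estred}, which under the stated assumptions on $s$ and $\theta$ and the convergence $\hat e_k \to 0$ from Lemma~\ref{lem:convHatError} yields $\gamma_k \to 0$. Then, since the saturation assumption~\eqref{eq:AMVM_satass} is part of the standing setup of Section~\ref{sec:amvm} (and is in particular needed for Lemma~\ref{lem:rel}), the reliability estimate $\|b_k - b\|_2 \leq c_{\textnormal{rel}}\gamma_k$ with $c_{\textnormal{rel}} = 1/(1-\csat)$ applies at every step $k$. Passing to the limit $k \to \infty$ on both sides gives $\lim_{k\to\infty}\|b_k - b\|_2 = 0$.

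There is essentially no obstacle in this proof; the work has already been done in the preceding two lemmas, and this statement is merely packaging the combination. The only thing worth being explicit about is that reliability is applied uniformly in $k$ with the same constant $c_{\textnormal{rel}}$, which is immediate from the definition of $\csat$ as a fixed constant independent of $k$ in~\eqref{eq:AMVM_satass}. Consequently, the proof will be only a few lines, essentially $\|b_k - b\|_2 \leq c_{\textnormal{rel}} \gamma_k \to 0$ as $k \to \infty$.
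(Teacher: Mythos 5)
Your proposal is correct and matches the paper's own proof exactly: the paper likewise combines the reliability bound $\norm{b_k - b}_2 \leq c_{\textnormal{rel}}\gamma_k$ from Lemma~\ref{lem:rel} with the convergence $\gamma_k \to 0$ from Lemma~\ref{lem:estred}. Nothing is missing; the one-line argument is all the paper gives as well.
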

\begin{proof}
Using the reliability of the estimator and the reduction principle leads to 
\[
\norm{b_k - b}_2 \leq c_{\textnormal{rel}} \gamma_k \rightarrow 0 \quad \text{for }k \to \infty.
\]
\end{proof}

\section{Boundary Integral Approximation of Linear Elasticity} \label{sec:BI_LE}
\subsection{Integral Formulation of Linear Elasticity}\label{subsec:four_one}
We assume that $\Omega \subset \R^3$ is a Lipschitz domain and its boundary $\partial\Omega=\Gamma_D\cup\Gamma_N$ is
partitioned into a Dirichlet boundary~$\Gamma_D$ and a Neumann boundary~$\Gamma_N$. The solution of the equations of linear elasticity can be written as
\[
u(x) = \tilde V \gamma_1^\textnormal{int} u - \tilde W \gamma_0^\textnormal{int} u,
\]
where 
\[
\tilde V f(x) := \int_{\partial\Omega} S(x,y) f(y) \ud s_y, \quad f \in [H^{-1/2}(\partial\Omega)]^3,
\]
denotes the single-layer potential and 
\[
\tilde W g(x) := \int_{\partial\Omega} \gamma_{1,y}^\textnormal{int} S(x,y) g(y) \ud s_y, \quad g \in [H^{1/2}(\partial\Omega)]^3,
\]
denotes the double-layer potential. The fundamental solution $S(x,y) := S_K(x-y)$ of linear elasticity is given by Kelvin's solution tensor
\[
S_K(x) := \left( \frac{1}{8\pi} \frac{1}{E} \frac{1+\nu}{1-\nu} \left[ \frac{3-4\nu}{\lvert x \rvert} \delta_{ij} + \frac{x_i x_j}{\lvert x \rvert^3}\right] \right)_{ij}\in\R^{3\times 3}
\] 
for $x \in \R^3 \setminus\{0\}$.
Using the trace operators $\gamma_0^\textnormal{int}$ and $\gamma_1^\textnormal{int}$, we define the single-layer operator $V := \gamma_0^\textnormal{int} \tilde V$ and
the hyper-singular operator $D := -\gamma_1^\textnormal{int}\tilde W$. Furthermore, we are going to use the
the double-layer operator 
\[
(Kg) (x) = \lim_{\varepsilon \rightarrow 0} \int_{\partial\Omega\setminus B_{\varepsilon}(x)} \gamma_{1,y}^\textnormal{int} S(x,y) g(y) \ud s_y, \quad x \in \partial\Omega, \quad g \in [H^{1/2}(\partial\Omega)]^3
\]
and its adjoint
\[
(K'f)(x) = \lim_{\varepsilon \rightarrow 0} \int_{\partial\Omega \setminus B_{\varepsilon}(x)} \gamma_{1,x}^\textnormal{int} S(x,y) f(y) \ud s_y, \quad x \in \partial\Omega, \quad f \in [H^{-1/2}(\partial\Omega)]^3.
\]
In order to solve  mixed boundary value problems (cf.~Sect.~\ref{sec:one}), boundary integral operators have to be defined on the respective part of the boundary.
On the Dirichlet boundary~$\Gamma_D$ we set
\[
V_{DD}: [\tilde H^{-1/2}(\Gamma_D)]^3 \rightarrow [H^{1/2}(\Gamma_D)]^3, \quad V_{DD} f:= (V \tilde f)\rvert_{\Gamma_D},
\]
where $\tilde H^{-1/2}(\Gamma_D) = [H^{1/2}(\Gamma_D)]'$
and $f = \tilde f\rvert_{\Gamma_D}$ with $\tilde f \in [H^{-1/2}(\partial\Omega)]^3$ and $\textnormal{supp } \tilde f \subset \Gamma_D$. Using the extension $\tilde g \in [H^{1/2}(\partial\Omega)]^3$
 of a function $g \in [\tilde H^{1/2}(\Gamma_N)]^3$, where \[\tilde H^{1/2}(\Gamma_N) := \{ v = \tilde v\rvert_{\Gamma_N}:\tilde v \in H^{1/2}(\partial\Omega), \, \textnormal{supp } \tilde v \subset \Gamma_N\},\]
 we define
\[
D_{NN}: [\tilde H^{1/2}(\Gamma_N)]^3 \rightarrow [H^{-1/2}(\Gamma_N)]^3, \quad D_{NN} g := (D \tilde g)\rvert_{\Gamma_N}
\]
with $ H^{-1/2}(\Gamma_N) = [\tilde H^{1/2}(\Gamma_N)]'$.
The following two operators describe the interaction between the Dirichlet and the Neumann data. We define the double-layer operator of the Neumann boundary 
\[
K_{ND}: [\tilde H^{1/2}(\Gamma_N)]^3 \rightarrow [H^{1/2}(\Gamma_D)]^3, \quad K_{ND} g := (K \tilde g)\rvert_{\Gamma_D}
\]
and the adjoint double-layer operator of the Dirichlet boundary
\[
K'_{DN}: [\tilde H^{-1/2}(\Gamma_D)]^3 \rightarrow [H^{-1/2}(\Gamma_N)]^3, \quad K'_{DN} f := (K'\tilde f)\rvert_{\Gamma_N}.
\]
Then, the boundary value problem 
\begin{align*}
-\sum_{j = 1}^3 \frac{\partial}{\partial x_j} \sigma_{ij}(u,x) &= 0, \quad x \in \Omega, \\
u(x) &= g_D(x), \quad x \in \Gamma_D, \\
\sum_{j = 1}^3 \frac{\partial}{\partial x_j} \sigma_{ij}(u,x) n_j(x) &= g_N(x), \quad x \in \Gamma_N
\end{align*}
has the solution 
\begin{equation} \label{eq:solution}
v = \tilde V(\tilde g_N + \tilde t) - \tilde W (\tilde g_D + \tilde u),
\end{equation}
where $\tilde u \in [H^{1/2}(\partial\Omega)]^3$ and $\tilde t \in [H^{-1/2}(\partial\Omega)]^3$ denote the extensions by zero of the functions $u \in [\tilde H^{1/2}(\Gamma_N)]^3$ and $t \in [\tilde H^{-1/2}(\Gamma_D)]^3$, which are the solutions
of the integral equations
\begin{align*}
V_{DD} t - K_{ND} u &= \left( \frac{1}{2} I + K \right) \tilde g_{D}\rvert_{\Gamma_D} - V \tilde g_N\rvert_{\Gamma_D}, \\
K'_{DN} t + D_{NN} u &= -D \tilde g_D\rvert_{\Gamma_N} + \left( \frac{1}{2} I - K' \right) \tilde g_N\rvert_{\Gamma_N}.
\end{align*}
In~\eqref{eq:solution} the given Dirichlet data $g_D \in [H^{1/2}(\Gamma_D)]^3$ and Neumann data $g_N \in [H^{-1/2}(\Gamma_N)]^3$ are extended to the functions $\tilde g_D \in [H^{1/2}(\partial\Omega)]^3$ and 
$\tilde g_N \in [H^{-1/2}(\partial\Omega)]^3$; see~\cite{os08}.

A stable numerical treatment of the above operators is only possible if the singularities are not too strong. Since $V$ is weakly singular, we do not expect any numerical problems. For $D$ and $K$ weakly singular representations have to
be found. Using the boundary differential operators
\[
T_{ij}(x) := n_j(x) \frac{\partial}{\partial x_j} - n_i(x) \frac{\partial}{\partial x_i}, \quad i,j = 1,2,3, \quad x \in \partial\Omega,
\]
and 
\[
\frac{\partial}{\partial S_1} (x) := T_{32}(x), \quad  \frac{\partial}{\partial S_2}(x) := T_{13}(x), \quad \frac{\partial}{\partial S_3} (x) := T_{12}(x),
\]
the double-layer operator $K$ can be rewritten as 
\[
Ku(x) = \frac{1}{4\pi} \int_{\partial\Omega} \frac{\partial}{\partial n_y} \frac{1}{\lvert x-y \rvert} u(y) \ud s_y - \frac{1}{4\pi} \int_{\partial\Omega} \frac{1}{\lvert x-y \rvert} T u(y) \ud s_y + 2 \mu V T u(x)
\]
for $u \in [H^{1/2}(\partial\Omega)]^3$. The operator $D$ in terms of weakly singular integrals has the representation
\begin{align*}
\langle Du, v \rangle_{\partial\Omega} &= \frac{\mu}{4\pi} \int_{\partial\Omega} \int_{\partial\Omega} \frac{1}{\lvert x-y \rvert} \left( \sum_{k = 1}^3 \frac{\partial}{\partial S_k} u(y) \cdot \frac{\partial}{\partial S_k} v(x) \right) \ud s_x \ud s_y \\
& \, + \frac{\mu}{2\pi} \int_{\partial\Omega} \int_{\partial\Omega} (T(x) v(x))^T \frac{I}{\lvert x-y \rvert} (T(y) u(y))^T \ud s_x \ud s_y - 4\mu^2 \langle VT u, Tv \rangle_{\partial\Omega} \\
& \, + \frac{\mu}{4\pi} \int_{\partial\Omega} \int_{\partial\Omega} \sum_{i,j,k = 1}^3 T_{kj}(x)v_i(x) \frac{1}{\lvert x-y \rvert} T_{ki}(y)) v_j(y) \ud s_x \ud s_y,
\end{align*}
see~\cite{han94}.

\subsection{Discretization Techniques} \label{sec:discretization}
Our goal is the computation of a numerical solution of the integral equations for linear elasticity via the boundary element method~(BEM). The starting point is an admissible triangulation
$\mathcal T_h$ of the surface of the computational domain $\Omega$ in regular triangles $\tau_i$, $i = 1, \dots, M$, and nodes $p_j$, $j = 1,\dots,N$. Here a triangulation is called admissible, if neighboring triangles have only one common 
edge or node; see~\cite{ss11}.

On the triangulation $\mathcal T_h$ the space of piecewise linear functions $\mathcal S^0_h$ is given by its basis
\[
\varphi_i (x) = \begin{cases} 1,& \quad x \in \tau_i, \\ 0,& \quad \textnormal{else},\end{cases} \quad i = 1,\dots,M,
\]
which is used for the discretization of the operator $V$.  The operator $K$ and parts of $D$ are discretized using functions
\[
\psi_j(x) := \begin{cases} 1,& \quad x = p_j, \\ 0,& \quad x = p_l \neq p_j, \\ \textnormal{linear},& \quad \textnormal{else}, \end{cases}
 \quad j  = 1,\dots,N,
\]
defining a basis of the space $\mathcal S_h^1(\Gamma)$ of continuous and piecewise linear functions.
We find solutions of the form 
\[
t_h(x) = \sum_{i = 1}^M \begin{bmatrix} t_i^{(1)} \\ t_i^{(2)} \\ t_i^{(3)} \end{bmatrix} \varphi_i(x) \quad \textnormal{and} 
\quad u_h(x) = \sum_{j = 1}^N \begin{bmatrix} u_j^{(1)} \\ u_j^{(2)} \\ u_j^{(3)} \end{bmatrix}\psi_j(x).
\]
The coefficient vectors  $t = [ t_i^{(1)}, t_i^{(2)}, t_i^{(3)}]_{i = 1}^M$ and $u = [ u_j^{(1)}, u_j^{(2)}, u_j^{(3)}]_{j = 1}^N$ are the solution of the linear system of equations
\begin{equation}\label{eq:sysline}
\begin{bmatrix} V_{DD,h} & -K_{ND,h} \\ K_{ND,h}^T & D_{NN,h} \end{bmatrix} \begin{bmatrix} t \\ u \end{bmatrix}
= \begin{bmatrix} -V & \frac{1}{2} M  + K   \\ \frac{1}{2} M - K^T & -D \end{bmatrix} \begin{bmatrix} \tilde g_N \\ \tilde g_D \end{bmatrix}
=: \begin{bmatrix} f_D \\ f_N \end{bmatrix}
\end{equation}
\begin{align*}
V_{DD,h}[ij] &= \langle V_{DD} \varphi_j , \varphi_i \rangle_{\Gamma_D}, \quad K_{ND,h}[jk] = \langle K_{ND} \psi_k, \varphi_j \rangle_{\Gamma_D}, \\
D_{NN,h}[kl] &= \langle D_{NN} \psi_l, \psi_k \rangle_{\Gamma_N}
\end{align*}
for $i,j = 1,\dots,M$ and $k,l = 1,\dots,N$. 

At the end of this section we want to state representations for the operators $V_{DD,h}$, $K_{ND,h}$, and~$D_{NN,h}$, which are more advantageous for numerical calculations. Using Kelvin's solution tensor and 
a suitable space for the discretization of the operators like the space $[\mathcal{S}^0(\Gamma)]^3$, the stiffness matrix $V_h \in \R^{3M \times 3M}$ of the single-layer potential has the representation
\begin{equation}\label{eq:Vh}
V_h = \frac{1}{2} \frac{1}{E} \frac{1+\nu}{1-\nu} \left( (3-4\nu) \begin{bmatrix} V_{\Delta, h} & 0 & 0 \\ 0 & V_{\Delta, h} & 0 \\ 0 & 0 & V_{\Delta, h} \end{bmatrix} +  
\begin{bmatrix} V_{11} & V_{12} & V_{13} \\ V_{12} & V_{22} & V_{23} \\ V_{13} & V_{23} & V_{33} \end{bmatrix} \right),
\end{equation}
where 
\[
V_{\Delta,h, ij} = \frac{1}{4} \int_{\tau_j} \int_{\tau_i} \frac{1}{\lvert x-y \rvert} \ud s_y \ud s_x
\]
and 
\[
V_{kl}[ij] = \frac{1}{4} \int_{\tau_j} \int_{\tau_i} \frac{(x_k - y_k)(x_l - y_l)}{\lvert x-y \rvert^3} \ud s_y \ud s_x
\]
are $M \times M$ sub-matrices for $k,l = 1,2,3$.  Together with the space $[\mathcal S^1(\Gamma)]^3$, the operator $K_h$ can be represented as
\begin{equation}\label{eq:Kh}
K_h = \begin{bmatrix} K_{\Delta , h} & 0 & 0 \\ 0 & K_{\Delta , h} & 0 \\ 0 & 0 & K_{\Delta , h} \end{bmatrix} - \begin{bmatrix} V_{\Delta, h} & 0 & 0 \\ 0 & V_{\Delta, h} & 0 \\ 0 & 0 & V_{\Delta, h} \end{bmatrix} 
T_h + \frac{E}{1+\nu} V_h T_h,
\end{equation}
where 
\[ 
K_{\Delta, h}[ij] = \frac{1}{4\pi} \sum_{\tau \, \in \, \textnormal{supp }\psi_j} \int_{\tau} \int_{\tau_i} \frac{(x-y)^T n(y)}{\lvert x-y \rvert^3} \psi_j(y) \ud s_y \ud s_x, 
\]
$i = 1,\dots,M, \; j = 1,\dots,N,$ and
\[ 
T_h := \begin{bmatrix} 0 & T_{12,h} & T_{13,h} \\ -T_{12,h} & 0 & T_{23,h} \\ -T_{13,h} & -T_{23,h} & 0 \end{bmatrix}, \quad T_{kl,h}[ij] := T_{kl}(x) \psi_j(\hat x), \, \hat x \in \tau_i, 
\] 
for $k,l \in \{1,2,3\}$, $i=1,\dots,M$, $j=1,\dots,N$. Finally, the matrix $D_h$ is given by
\begin{equation}\label{eq:Dh}
\begin{split}
D_h &= \sum_{k = 1}^3 \frac{\mu}{4\pi} S_{k,h}^T  \begin{bmatrix} V_{\Delta, h} & 0 & 0 \\ 0 & V_{\Delta, h} & 0 \\ 0 & 0 & V_{\Delta, h} \end{bmatrix} S_{k,h}
+ \frac{\mu}{2\pi}  T_h^T \begin{bmatrix} V_{\Delta, h} & 0 & 0 \\ 0 & V_{\Delta, h} & 0 \\ 0 & 0 & V_{\Delta, h} \end{bmatrix} T_h \\
 & \quad + 4 \mu^2 T_h^T V_h T_h + \frac{\mu}{4\pi} D'_h
\end{split}
\end{equation}
with 
\[
D'_h := \begin{bmatrix} D'_{11,h} & D'_{12,h} & D'_{13,h} \\ D'_{21,h} & D'_{22,h} & D'_{23,h} \\
  D'_{31,h} & D'_{32,h} & D'_{33,h} \end{bmatrix}, 
\quad D'_{ij,h} := \sum_{k = 1}^3 T_{kj,h}^T V_{\Delta, h} T_{ki,h}
\]
and 
\begin{align*}
S_{1,h} &:= \begin{bmatrix} T_{32,h} & 0 & 0 \\ 0 & T_{32,h} & 0 \\ 0 & 0 & T_{32,h} \end{bmatrix}, \quad  S_{2,h} := \begin{bmatrix} T_{13,h} & 0 & 0 \\ 0 & T_{13,h} & 0 \\ 0 & 0 & T_{13,h} \end{bmatrix}, \\
S_{3,h} &:=  \begin{bmatrix} T_{21,h} & 0 & 0 \\ 0 & T_{21,h} & 0 \\ 0 & 0 & T_{21,h} \end{bmatrix},
\end{align*}
see~\cite{rs07}. Using specific restriction operators defined in~\cite{rs07} allows the re-presentation of the discretized operators~$V_h$, $K_h$, and~$D_h$ with respect to the corresponding boundaries, resulting in the operators~$V_{DD,h}$, $K_{ND,h}$, and~$D_{NN,h}$

\section{The Adaptive Solution of Lam\'e Equations}\label{sec:adapt_BACA}
The adaptive matrix-vector multiplication introduced in Sect.~\ref{sec:amvm} can be applied in the
context of boundary element methods when the given data vectors are multiplied by discrete integral
operators on the the right-hand side of the discretized integral equation.
If also the solution of the latter is to be computed, then the BACA method introduced in~\cite{bb21}
can be employed. It combines the adaptive construction of the $\mathcal{H}$-matrix approximation of the system matrix
with the simultaneous iterative solution of the system.
The individual blocks are approximated only as accurate as necessary for the prescribed accuracy and
the given right-hand side vector. 
The BACA was developed for the Laplace equation and is thus not adapted to the structure of the Lamé equations. In this section, BACA will be modified to allow its application
to problems from linear elasticity.

We consider the numerical solution of the linear system $Ax=f$ from \eqref{eq:sysline} 
with \[A = \begin{bmatrix} V_{DD,h} & -K_{ND,h} \\ K_{ND,h}^T & D_{NN,h} \end{bmatrix}, \quad f = \begin{bmatrix} f_D \\ f_N \end{bmatrix}, \quad \textnormal{ and } \quad x = \begin{bmatrix} t \\ u \end{bmatrix}.\]
Each of the four sub-matrices of $A$ consists again of nine sub-matrices with an associated block-cluster tree. Let $P$ be the union of all admissible partitions concerning the boundaries and the different operators, i.e.,
\begin{equation*}
P := P_{V} \cup P_{K} \cup P_{D},
\end{equation*}
where $P_{V}$, $P_{K}$, and $P_{D}$ consist of all blocks of the discretized single-layer operator $V_{h}$, the discretized double-layer operator $K_{h}$ and the discretized hyper-singular operator $D_{h}$
and denote by $P_\textnormal{adm}$ the admissible blocks contained in~$P$. Accordingly, $c_{\textnormal{sp},V}$, $c_{\textnormal{sp},K}$, and $c_{\textnormal{sp},D}$ are the sparsity constants associated with the block-cluster trees for the operators~$V_h$, $K_h$, and~$D_h$.   
The constructed matrix approximation is denoted by 
\[
A_k = \begin{bmatrix} V_{DD,k} & -K_{ND,k} \\ K_{ND,k}^T & D_{NN,k} \end{bmatrix},
\]
where the sub-matrices $V_{DD,h}$, $K_{ND,h}$, and $D_{NN,h}$ are approximated individually. Moreover, let 
\[
\hat A_k = \begin{bmatrix} \hat V_{DD,k} & -\hat K_{ND,k} \\ \hat K_{ND,k}^T & \hat D_{NN,k} \end{bmatrix} 
\]
be a more accurate approximation of~$A$ than~$A_k$. We assume that the saturation condition 
\begin{equation} \label{eq:satElas}
\norm{\hat A_k x_k - A x_k}_2  \leq \csat \norm{A_k x_k - A x_k}_2
\end{equation}
is fulfilled for some $0 < \csat < 1$, where $x_k $ denotes the solution of the linear system $A_k x_k = b$.
Again, a possible strategy for choosing $\hat A_k$ is to add a fixed number of additional ACA steps for each admissible block of~$A_k$ (look-ahead approximation)
and to set $(\hat A_k)_{ts} = A_{ts}$ for all other blocks~$t \times s \in P_{\textnormal{non-adm}}$.

In order obtain some information about the error of the approximation, we use the error estimator
\[
\mathcal E_k^2 := \sum_{t \times s \in P} \norm{(A_k - \hat A_k)_{ts} (x_k)_s}_2^2.
\]
If BACA is to be applied to the saddle-point problem~\eqref{eq:sysline}, we first have to employ
the Bramble-Pasciak conjugate gradient method~\cite{bp88} as the iterative solver. The difference to the case of the Laplace equation lies
in the selection of the blocks to be refined. We use the following strategy based on the
the representations \eqref{eq:Vh}, \eqref{eq:Kh}, and \eqref{eq:Dh} of the discretized operators~$V_h$, $K_h$, and $D_h$,
where we leave the fixed matrices~$T_h$, $S_{1,h}$, $S_{2,h}$ and $S_{3,h}$ unchanged during the whole procedure. Since the sub-matrix $V_{\Delta,h}$
is contained in all the operators~$V_h$, $K_h$, and $D_h$, the refinement of $V_{\Delta,h}$ is implemented at first.
Afterwards the refinements of $V_{ij}$, $i,j = 1,2,3$, $K_{\Delta,h}$, $D'_{h}$ and $D_{h}$ follow. The approximation of a block
is improved only if it has been selected. This leads to the following Algorithm~\ref{alg:BACA_LE}.
\begin{algorithm}[H]
\caption{Block-adaptive ACA for linear elasticity}\label{alg:BACA_LE}
\begin{algorithmic}[0]
\State 
\begin{enumerate}
 \item Start with a coarse $\Hm$-matrix approximation $A_{0}$ of $A$ and set $k=0$.
 \item Given $\alpha\geq 0$, apply the Bramble-Pasciak-CG to the linear system $A_k x_k=b$ until the residual error satisfies
 \begin{equation} \label{eq:solvCond}
 \norm{b-A_kx_k}_2\leq \alpha\norm{(A_k-\hat A_k)x_k}_2
 \end{equation}
 (use $x_{k-1}$ as a starting vector; $x_{-1}:=0$).
 \item Given $0<\theta<1$, find a set of marked blocks $M_k\subset P_\textnormal{adm}$ with minimal cardinality such that
 \begin{equation}\label{Dmark1}
 \mathcal E_k(M_k)\geq \theta\,\mathcal E_k,
 \end{equation}
 where $\mathcal E_k^2(M):=\sum_{t\times s\in M} \norm{(A_k-\hat A_k)_{ts}(x_k)_s}_2^2$ and
 $\mathcal E_k:=\mathcal E_k(P_\textnormal{adm})$.
 \item 
 
Consider refinement in the following order: 
\begin{itemize}
\item[(i)] If blocks in $D_{NN}$ are selected, set
\begin{equation*}
D_{NN, k+1} = \hat D_{NN, k}.
\end{equation*}
\item[(ii)] If blocks in $K_{ND}$ are selected, set 
\begin{equation*}
V_{DD,k+1} = \hat V _{DD, k}  
\end{equation*}
and $(K_{\Delta, h, k+1})_b = (\hat K_{\Delta, h, k})_b$ for all selected blocks $b$ associated with the operator $K_{\Delta,h}$. 
\item[(iii)] If only blocks in $V_{DD}$ are selected, set $(V_{\Delta, h, k+1})_b = (\hat V_{\Delta, h, k})_b$ or $(V_{ij})_b = (\hat V_{ij})_b$, $i = 1,2,3$, for all respective selected blocks $b$.
\end{itemize}
All blocks not selected remain at the current stage of approximation.
 \item If $\mathcal E_{k+1}>\eps_\textnormal{BACA}$ increment $k$ and go to 2.
\end{enumerate}
\end{algorithmic}
\end{algorithm}

In the following we adapt the convergence analysis (presented in~\cite{bb21}) to the previous
method. For the efficiency of the error estimator or at least a lower bound on the expression $\norm{b - A x_k}_2$, we refer to~\cite{bb21}.
The reliability of the estimator follows from the saturation assumption.
\begin{lemma}\label{lem:EstRel}
Let the saturation assumption~\eqref{eq:satElas} be valid. Then $\mathcal E_k$ is reliable, i.e.\ it holds
\[
\norm{b - A x_k}_2 \leq \frac{1 + \alpha (1+\csat)}{1-\csat} \norm{(A_k - \hat A_k) x_k}_2 \leq  \sqrt{27  C_{\textnormal{sp}} \mathcal L} \,\frac{1 + \alpha (1+\csat)}{1-\csat} \mathcal E_k,
\]
where $L$ is the maximum depth of the used cluster trees and $C_{\textnormal{sp}} := \max \{ c_{\textnormal{sp}V}, c_{\textnormal{sp},K}, c_{\textnormal{sp},D}\}$.
\end{lemma}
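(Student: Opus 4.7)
The plan is to split the claimed chain into two independent steps. First, bound the true residual $\|b-Ax_k\|_2$ by the global look-ahead difference $\|(A_k-\hat A_k)x_k\|_2$ using only the saturation assumption~\eqref{eq:satElas} and the solver stopping criterion~\eqref{eq:solvCond}. Second, pass from this global quantity to the block-localised estimator $\mathcal{E}_k$ using the hierarchical structure of $A_k$ and $\hat A_k$ together with the sparsity constants of the three partitions $P_V, P_K, P_D$.

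For the first step I would apply the triangle inequality
\[
\|b-Ax_k\|_2 \leq \|b-\hat A_kx_k\|_2 + \|\hat A_kx_k - Ax_k\|_2,
\]
treat the first summand via
$\|b-\hat A_k x_k\|_2 \leq \|b-A_kx_k\|_2 + \|(A_k-\hat A_k)x_k\|_2 \leq (1+\alpha)\|(A_k-\hat A_k)x_k\|_2$,
and the second via the saturation assumption followed by
\[
\|A_kx_k - Ax_k\|_2 \leq \|b-A_kx_k\|_2 + \|b-Ax_k\|_2 \leq \alpha\|(A_k-\hat A_k)x_k\|_2 + \|b-Ax_k\|_2.
\]
Moving the $\csat\|b-Ax_k\|_2$ term that then appears on the right-hand side over to the left and dividing by $1-\csat$ produces exactly the constant $(1+\alpha(1+\csat))/(1-\csat)$.

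For the second step I would expand
\[
(A_k-\hat A_k)x_k = \sum_{t\times s\in P}(A_k-\hat A_k)_{ts}(x_k)_s,
\]
understanding each summand as a vector in $\R^M$ supported on rows in $t$, and estimate the Euclidean norm squared row-by-row by Cauchy-Schwarz. Since the partitions $P_V$, $P_K$, $P_D$ each have sparsity constant at most $C_{\textnormal{sp}}$, every row index is contained in only a bounded multiple of $C_{\textnormal{sp}}$ blocks of $P$, which gives a contribution of order $C_{\textnormal{sp}}\mathcal{E}_k^2$. The remaining factor $27\,\mathcal{L}$ absorbs the multiplicities introduced by the tensor-product representations~\eqref{eq:Vh}, \eqref{eq:Kh} and~\eqref{eq:Dh}: an elementary scalar $\mathcal{H}$-matrix building block such as $V_{\Delta,h}$ enters $A_k-\hat A_k$ in several slots of the $3\times 3$ structure and, via the reuse of $V_h$ inside both $K_h$ and $D_h$, along clusters of up to $\mathcal{L}$ levels of the block-cluster tree, so that the contribution of a single refinement must be aggregated with the help of the depth $\mathcal{L}$.

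The first inequality is routine once the correct order of triangle inequalities and the saturation reinjection has been identified. The real difficulty lies in the second step: one must show that, after tracing the effect of a refinement of a single scalar admissible block through the nested combinations~\eqref{eq:Vh}--\eqref{eq:Dh}, its total contribution to $(A_k-\hat A_k)x_k$ is controlled precisely by $\sqrt{27\,C_{\textnormal{sp}}\,\mathcal{L}}\,\mathcal{E}_k$ and not by a worse power of $\mathcal{L}$ or of $C_{\textnormal{sp}}$. I would follow the template of the analogous estimate for the Laplace case in~\cite{bb21}, with the additional bookkeeping needed for the three-dimensional displacement structure of the Lam\'e system being the genuinely new ingredient.
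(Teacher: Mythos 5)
Your first inequality is correct and, up to the order in which the triangle inequality is applied, identical in substance to the paper's argument: both bound $\norm{b-A_kx_k}_2\leq\alpha\norm{(A_k-\hat A_k)x_k}_2$ via the stopping criterion~\eqref{eq:solvCond}, use saturation on $\norm{\hat A_kx_k-Ax_k}_2$, re-expand $\norm{A_kx_k-Ax_k}_2$ against $b$, and absorb the resulting $\csat\norm{b-Ax_k}_2$ into the left-hand side, which yields exactly the constant $(1+\alpha(1+\csat))/(1-\csat)$.

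The gap is in the second inequality. Your row-by-row Cauchy--Schwarz idea is the right skeleton, but the accounting you give for the factors $27$ and $L$ is not the mechanism that makes the estimate work. The estimator $\mathcal E_k$ already sums over the blocks of $P=P_V\cup P_K\cup P_D$ directly, so no tracing of ``a single scalar refinement through the nested representations \eqref{eq:Vh}--\eqref{eq:Dh}'' is needed for reliability (that bookkeeping belongs to the marking and refinement step of Algorithm~\ref{alg:BACA_LE}), and the reuse of $V_{\Delta,h}$ inside $K_h$ and $D_h$ contributes nothing to the constant. What actually produces $27C_{\textnormal{sp}}L$ is the following count: a fixed row index lies in exactly one row cluster $t\in T_I^{(l)}$ on each of the $L$ levels, and for each such $t$ the number of blocks $t\times s\in P$ is at most $27C_{\textnormal{sp}}$, because each of the three operators $V_h$, $K_h$, $D_h$ decomposes into nine scalar sub-operators, each with sparsity constant at most $C_{\textnormal{sp}}$; hence at most $27C_{\textnormal{sp}}L$ blocks contribute to any given row, and Cauchy--Schwarz over that many summands gives the claimed bound after summing over rows. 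Equivalently --- and this is how the paper argues --- one writes $A_k-\hat A_k=\sum_{l=1}^{L}(A_k-\hat A_k)^{(l)}$ as a sum of level matrices, picks up the factor $L$ from Cauchy--Schwarz over the levels, uses that the row clusters within one level are disjoint so the squared norm splits over $t\in T_I^{(l)}$, and then applies Cauchy--Schwarz over the at most $27C_{\textnormal{sp}}$ column clusters $s$ with $t\times s\in P$. Without replacing your ``aggregation over nested representations'' narrative by one of these two counts, the step from $\norm{(A_k-\hat A_k)x_k}_2$ to $\sqrt{27C_{\textnormal{sp}}L}\,\mathcal E_k$ is not established.
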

\begin{proof}
The first assertion follows with condition~\eqref{eq:solvCond} and the saturation assumption from 
\begin{align*}
\norm{b - A x_k}_2 &\leq \norm{b - A_k x_k}_2 + \norm{(A_k- \hat A_k) x_k}_2 + \norm{\hat A_k x_k - A x_k}_2\\
& \leq (\alpha +1) \norm{(A_k - \hat A_k) x_k}_2 + \csat \norm{A_k x_k - A x_k}_2 \\
& \leq (\alpha +1 + \csat \alpha)  \norm{(A_k - \hat A_k) x_k}_2 + \csat \norm{b - A x_k}_2.
\end{align*}
The second inequality is a result of the decomposition of the sub-matrices of $A=\sum_{l=1}^{L} A^{(l)}$ into a sum of level matrices~$A^{(l)}$. Due to the fact that there are several cluster trees involved and the maximal $L$, no more further sub-matrices will exist at a certain level. In this case, use the 
zero sub-matrix for the remaining levels.  We observe
\begin{align*}
\norm{(A_k - \hat A_k) x_k}_2^2 & \leq \left( \sum_{l = 1}^{L} \norm{(A_k - \hat A_k)^{(l)} x_k}_2 \right)^2 \leq L \sum_{l = 1}^{L} \norm{(A_k - \hat A_k)^{(l)} x_k}_2^2 \\
&= L \sum_{l = 1}^{ L} \sum_{t \in T_I^{(l)}} \norm{ \sum_{s : t \times s \in P} (A_k - \hat A_k)_{ts} (x_k)_s}_2^2 \\
& \leq L \sum_{l = 1}^{L} \sum_{t \in T_I^{(l)}} \left(  \sum_{s : t \times s \in P} \norm{ (A_k - \hat A_k)_{ts} (x_k)_s}_2 \right)^2 \\
& \leq 27 C_{\textnormal{sp}} L \sum_{l = 1}^{L}  \sum_{t \in T_I^{(l)}} \sum_{s : t \times s \in P} \norm{ (A_k - \hat A_k)_{ts} (x_k)_s }_2^2 \\
&= 27 C_{\textnormal{sp}} L \sum_{t \times s \in P} \norm{ (A_k - \hat A_k)_{ts} (x_k)_s}_2^2 \\
&= 27 C_{\textnormal{sp}} L \mathcal E_k^2,
\end{align*}
since each of the three disrcretized operators consists of nine sub-operators.
\end{proof}
Except for the inclusion of different sparsity constants and tree depths, there are no other differences in the convergence proof of the adapted BACA method compared to BACA for the Laplace equation. 
For this reason, we refer to the proofs in~\cite{bb21} for the rest of the convergence analysis. 

After having calculated the boundary data $(t_h,u_h)$, i.e.
\[
t_h = \sum_{i = 1}^M t_i \varphi_i \quad \textnormal{and} \quad u_h = \sum_{j = 1}^N u_j \psi_j
\] 
with coefficient vectors $t,u\in\R^3$, the solution $u_h$ in $\Omega$ can be evaluated by
\[
u_h(x) = \sum_{j = 1}^M t_j \int_{\partial\Omega} S(x,y) \varphi_j(y) \ud s_y - \sum_{k = 1}^N u_k \int_{\partial\Omega} \gamma_{1,y}^\textnormal{int} S(x,y) \psi_k(y) \ud s_y,
\]
for $x \in \Omega$, and the stresses $\sigma(u_h, x)$ can be computed using the derivatives  
\begin{equation}\label{eq:partiu}
\partial_{x_i} u_h(x) = \sum_{j = 1}^M t_j \int_{\partial\Omega} \partial_{x_i} S(x,y) \varphi_j (y) \ud s_y - \sum_{k = 1}^N u_k \int_{\partial\Omega} \partial_{x_i} (\gamma_{1,y}^\textnormal{int} S(x,y)) \psi_k(y) \ud s_y, 
\end{equation}
for $i = 1,2,3$ together with Hook's law. If, for instance, the deformations are to be analyzed at several points~$x_1, \dots, x_l$, $l \in \N$, this can be understood as the computation of a vector 
\begin{equation} \label{eq:vector_eq}
v:= \tilde V_h t - \tilde W_h u
\end{equation}
with $v = [u_h(x_i)]_{i=1,\dots,l}$. 
Since the two discrete operators
\[\tilde V_h:=\left[\int_{\partial\Omega}  S(x_i,y) \varphi_j (y) \ud s_y\right]_{ij}
\quad\text{and}\quad 
\tilde W_h:=\left[\int_{\partial\Omega} \gamma_{1,y}^\textnormal{int} S(x_i,y) \psi_k(y) \ud s_y\right]_{ik},
\] 
with $i = 1,\dots,l$, $j = 1,\dots,M$, and $k = 1,\dots,N$ are of collocation type, we are able to accelerate the evaluation of the deformations and stresses with the introduced AMVM. 
The evaluation of the stresses using the derivatives $\partial_{x_i} u_h$ $i = 1,2,3$, can be done in a similar way using~\eqref{eq:partiu}.

\section{Numerical Results} \label{sec:numerics}
The numerical experiments are divided into two parts. In both cases the numerical solution of the Lam\'e equations
 \begin{equation} \label{eq:Lame}
 -\mu\, \Delta u(x) - (\lambda + \mu)\, \textnormal{grad} \textnormal{ div } u(x) = 0, \quad x \in \Omega ,
 \end{equation}
 with the Lam\'e constants \eqref{eq:lameconst}
 and $E = 1.0$ (N/mm), $\nu = 0.3$ is computed. The first part deals with the quality of the error estimator in AMVM and the numerical performance of AMVM compared to the multiplication by an approximation obtained from ACA. 
Then, the numerical performance of the combination of AMVM and BACA adapted to linear elasticity (see Sect.~\ref{sec:adapt_BACA}) is investigated in comparison with ACA. First calculations of linear elasticity using the ACA were carried out in~\cite{bg06}.

The computations in this article were performed on a computer with an Intel(R) Core(TM) i7-6700HQ CPU at 2.60 GHz.
All approximation steps in the procedures are performed without parallelization.
The look-ahead approximation is two steps of ACA ahead of the current approximation.

\subsection{Quality of AMVM for linear elasticity}\label{sec:six_one}
The qualitative investigations of AMVM are carried out on three different discretizations of the unit cube $\Omega = [-1,1]^3$ consisting of 488, 1946, and 7778 points.
The following boundary conditions are chosen
 \begin{equation*}
 \gamma_0 u(x) = g_D(x) :=  S(x-p) 
 \end{equation*}
$\textnormal{for } x \in \Gamma_D = \{x \in \Omega:x_1 = 1  \text{ or } x_2 = -1 \text{ or } x_3 = 1\}$ and 
 \begin{equation*}
 \gamma_1 u(x) = g_N(x):= \frac{\partial}{\partial n} S(x-p)  
 \end{equation*}
  $\textnormal{for } x \in \Gamma_N = \{x \in \Omega : x_1 = -1\text{ or } x_2 = 1 \text{ or } x_3 = -1\}$ with $p = (5.0, 5.0, 5.0)^T$.
  We compare the computational time and the storage requirements of AMVM and ACA when computing the
  right-hand side of~\eqref{eq:sysline}. The approximation of the latter will be denoted by~$b_\textnormal{AMVM}$ and~$b_\textnormal{ACA}$, respectively.
 The blockwise accuracy of ACA is chosen to be~$\eps_\textnormal{ACA} = 10^{-6}$ and the admissibility parameter is $\beta = 0.8$.
 The results of ACA are presented in Tables~\ref{tab:LameACA} and~\ref{tab:LameACASpeicher}. 
          \begin{table}[htb] \centering
   \scalebox{1.0}{
  \begin{tabular}{ r  c |  c r }
    $N$ & $b_{\min}$  &  $\norm{b-b_\textnormal{ACA}}_2$ &  time approximation \\ \hline 
  488	&15	&3.45e-7				&6.9	s	\\ [2pt]
  1\,946	&20		&4.43e-7			&39.6 s	\\ [2pt]
  7\,778	&30		&2.53e-7 	 	&248.1 s	\\ 
  \end{tabular}}
  \caption{Error and time required to compute right-hand side of \eqref{eq:sysline} via ACA.} \label{tab:LameACA}
  \end{table}
      \begin{table}[htb] \centering
   \scalebox{1.0}{
  \begin{tabular}{ r |r r r r r r r r r r }
     	& \multicolumn{2}{c}{$V_{\Delta,h}$} 	&  \multicolumn{2}{c}{$V_{11}$}	&  \multicolumn{2}{c}{$V_{12}$}	&  \multicolumn{2}{c}{$V_{13}$}	&  \multicolumn{2}{c}{$V_{22}$} 	\\
       $N$		&MB	& \% 	& MB 	& \%	& MB 	& \%& MB 	& \% 	& MB 	& \%  \\ \hline 
   488	&3.0	&83.5			&3.5	&95.6	&3.5	&96.5	&3.5	&96.4	&3.4	&95.5		\\ [2pt]
  1\,946	&19.7	&34.1			&23.9	&41.4	&23.4	&40.5	&23.3	&40.4	&23.8	&41.3		\\ [2pt]
 7\,778	&115.6 	&12.5			&137.6 	&14.9		&132.7	&14.4	&131.3	&14.2	&137.7	&14.9	\\ 
  \end{tabular}} \\
  \vspace*{0.4cm}
    \scalebox{1.0}{
  \begin{tabular}{ r |r r r r r r r r r r}
     	&  \multicolumn{2}{c}{$V_{23}$}	&  \multicolumn{2}{c}{$V_{33}$}	&  \multicolumn{2}{c}{$K_{\Delta,h}$}\\
       $N$ 	& MB 	& \% 	& MB 	& \% 	& MB 	& \% \\ \hline 
   488	&3.5	&96.1			&3.5	&95.6	&3.6	&99.6		\\ [2pt]
  1\,946	&23.3	&40.4			&23.8	&41.3	&31.1	&53.9		\\ [2pt]
 7\,778	&131.4 &14.2			&136.2 &14.8	&201.1	&21.8	\\ 
  \end{tabular}}
  \caption{Storage requirements for the approximations constructed by ACA.} \label{tab:LameACASpeicher}
  \end{table} 
  
Applying the adaptive matrix-vector multiplication (AMVM) to the linear elasticity problem described in the beginning of Sect.~\ref{sec:numerics} provides for $\theta = 0.7$
the results shown in Tables~\ref{tab:LameAMVM} and~\ref{tab:LameAMVMSpeicher}.
The error $\norm{b-b_\textnormal{AMVM}}_2$ was kept at the same order of magnitude as~$\norm{b-b_\textnormal{ACA}}_2$ in the previous tests.
On all three discretizations of the cube~$\Omega$ a reduction of the computational time could be achieved.
The storage requirements of the operators $V_{\Delta,h}$, $V_{11}$, $V_{12}$, $V_{13}$, 
$V_{22}$, $V_{23}$, and $V_{33}$ turn out to be slightly lower than the corresponding approximations obtained via ACA.
The main benefit is obtained for the operator~$K_{\Delta,h}$.
          \begin{table}[htb] \centering
   \scalebox{1.0}{
  \begin{tabular}{ r   c| c  r }
    $N$ & $b_{\min}$  &  $\norm{b-b_\textnormal{AMVM}}_2$ & time approximation \\ \hline 
  488	&15	&6.34e-7			&5.2	s		\\ [2pt]
  1\,946	&20	&6.85e-7		&29.5 s		\\ [2pt]
  7\,778	&30	&4.63e-7	&188.8	s	\\ 
  \end{tabular}}
  \caption{Error and time required to compute right-hand side of \eqref{eq:sysline} via AMVM.} \label{tab:LameAMVM}
  \end{table}
  
      \begin{table}[htb] \centering
   \scalebox{1.0}{
  \begin{tabular}{ r |r r r r r r r r r r }
      	& \multicolumn{2}{c}{$V_{\Delta,h}$} 	&  \multicolumn{2}{c}{$V_{11}$}	&  \multicolumn{2}{c}{$V_{12}$}	&  \multicolumn{2}{c}{$V_{13}$}	&  \multicolumn{2}{c}{$V_{22}$} 	\\
        $N$ 		&MB	& \%			& MB 	& \%	& MB 	& \% 	& MB 	& \% 	& MB 	& \%  \\ \hline 
   488	&2.9	&79.1			&3.0	&84.2	&3.0	&84.2	&3.0	&84.4	&3.0	&84.2		\\ [2pt]
  1\,946	&19.3	&33.5			&21.3	&37.0	&20.9	&36.2	&20.8	&36.1	&21.3	&36.9		\\ [2pt]
 7\,778	&113.7 &12.3			&120.3 &13.0	&116.6	&12.6	&115.5	&12.5	&120.6	&13.1		\\ 
  \end{tabular}} \\
  \vspace*{0.4cm}
    \scalebox{1.0}{
  \begin{tabular}{ r |r r r r r r r r r r }
     	&  \multicolumn{2}{c}{$V_{23}$}	&  \multicolumn{2}{c}{$V_{33}$}	&  \multicolumn{2}{c}{$K_{\Delta,h}$}\\
       $N$ 		& MB 	& \% 	& MB 	& \% 	& MB 	& \% \\ \hline 
   488	&3.0	&84.3	&3.0	&84.2	&2.2	&60.8	\\ [2pt]
  1\,946	&20.8	&36.1	&21.3	&36.9	&17.2	&29.9		\\ [2pt]
 7\,778	&115.7 &12.5	&119.4 &14.8	&145.4	&15.8	\\ 
  \end{tabular}}
  \caption{Storage requirements for the approximations constructed by AMVM.} \label{tab:LameAMVMSpeicher}
  \end{table}
  
 Before moving on to a more realistic problem, we take a closer look at the reliability and efficiency of the
 error estimator. We present the results obtained in the case of the discretization consisting of
 488~points. We employ a rank-$2$ approximation to start the iterative approximation process.
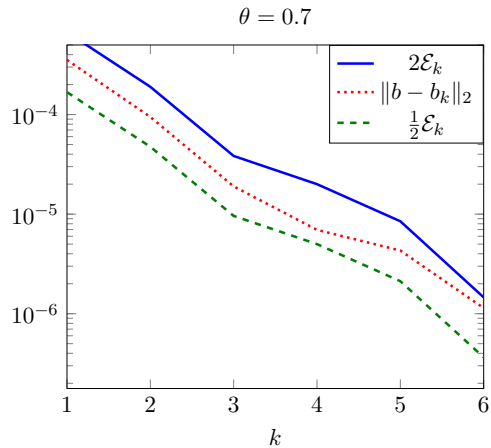
\begin{figure}[H] \centering \scalebox{0.8}{
   \begin{tikzpicture}
   \begin{semilogyaxis}[ 
        xmin = 1, xmax = 6, ymax = 5e-04, xlabel = $k$, 
   legend style = {at={(1,1)}, anchor = north east},
   title={$\theta = 0.7$}]
   \addplot[blue, very thick] coordinates{
   (1,2*3.36e-04)
   (2,2*9.50e-05)
  (3,2*1.92e-05)
   (4,2*1.00e-05)
   (5,2*4.24e-06)
   (6,2*7.30e-07)
   };
   \addlegendentry{$2\mathcal{E}_k$}
   \addplot[red, dotted, very thick] coordinates{
   (1,3.54e-04)
   (2, 9.46e-05)
   (3,1.90e-05)
   (4, 6.95e-06)
   (5, 4.31e-06)
   (6, 1.14e-06)
   };
   \addlegendentry{$\norm{b-b_k}_2$}
     \addplot[green!50!black, dashed, very thick] coordinates{
   (1,0.5*3.36e-04)
   (2,0.5*9.50e-05)
  (3,0.5*1.92e-05)
   (4,0.5*1.00e-05)
   (5,0.5*4.24e-06)
   (6,0.5*7.30e-07)
   };
   \addlegendentry{$\frac{1}{2}\mathcal{E}_k$}
   \end{semilogyaxis}
   \end{tikzpicture}
   }
   \caption{Quality of the error estimator $\mathcal{E}_k$ in the case of AMVM.}
   \label{fig:error_quality_lame}
   \end{figure}
  Figure~\ref{fig:error_quality_lame} shows that the error estimator~$\mathcal E_k$ estimates the error~$\norm{b-b_k}_2$ of the right-hand side reliably and efficiently, which confirms the theoretical results of the adaptive matrix-vector multiplication presented in Sect.~\ref{sec:amvm}.

\subsection{Beam with double-T shape: Load in {$z$}-direction} \label{sec:six_two}
The following experiments focus on the numerical solution of the Lam\'e equations on three discretizations of the geometry shown in Figure~\ref{fig:Mesh1}.
The beam has a length, height and width of~2 with a central part having height and width of~1. 
Figure~\ref{fig:Rand} shows the assignment of the boundary elements to Dirichlet and Neumann part.
On the blue area the beam is loaded with a force of 0.1~N, while the Dirichlet boundary is illustrated by the green area. On the remaining part of the boundary, i.e.\ on the gray area in Figure~\ref{fig:Rand}, 
homogeneous Neumann boundary conditions ($ \gamma_1^\textnormal{int} u(x) = 0$) are prescribed.
The right-hand side of the system of equations which has to be computed is obtained by multiplying the given boundary data by the respective discretized operators $V_h$, $K_h$ and $D_h$; cf.~\eqref{eq:sysline}.

We compare the approximate solution obtained from approximating the coefficient matrix via BACA and ACA, respectively.

\begin{figure}[htb]
\begin{minipage}{0.5\textwidth} \centering
 \scalebox{0.3}{\includegraphics{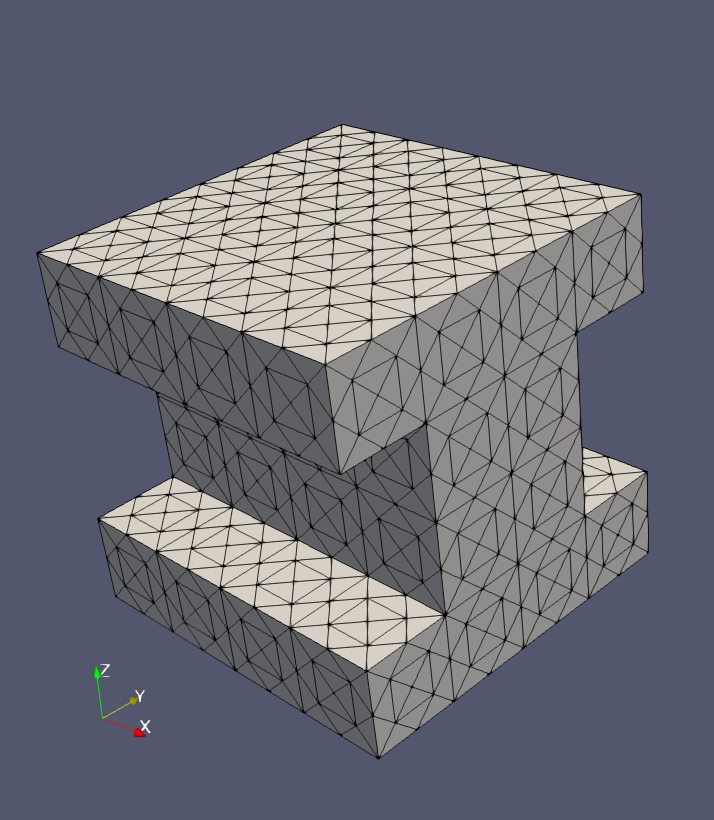}} 
 \caption{Discretization of double T-beam.}\label{fig:Mesh1}
 \end{minipage}
  \hspace*{0.25cm}
\begin{minipage}{0.5\textwidth} \centering \vspace*{0.44cm}
 \scalebox{0.387}{\includegraphics{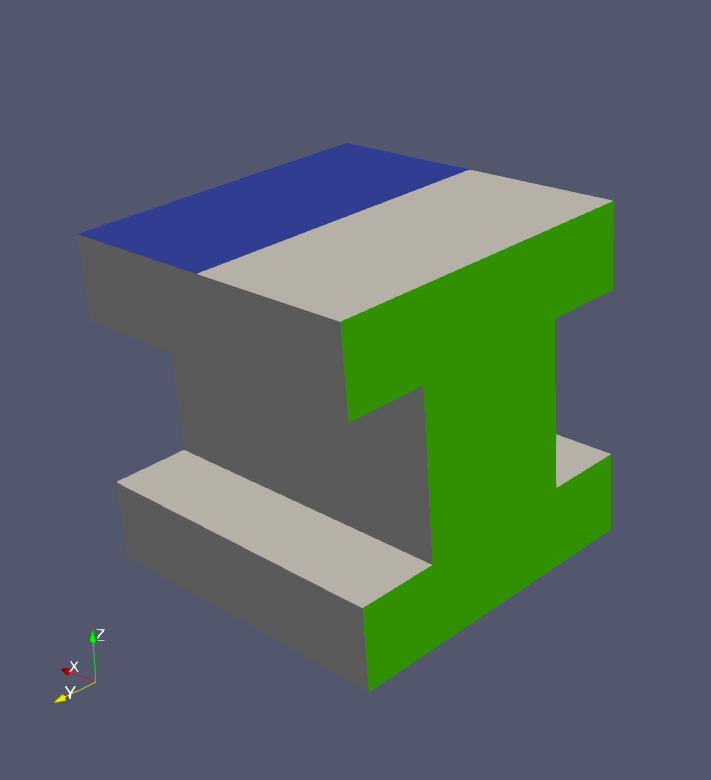}} \vspace*{0.1cm} \\
 \caption{Dirichlet boundary green and loaded Neumann boundary part blue.}\label{fig:Rand}
 \end{minipage}
\end{figure}

The deformations of the beam under load in $z$-direction are shown in Figure~\ref{fig:ACA1}. The maximum absolute differences between the deformations ge-nerated via ACA and BACA in $x$-, $y$- and $z$-direction are $1.2e^{-4}$, $2.4e^{-4}$ and $3.3e^{-4}$. so, both methods ACA and BACA give similar results. 
\begin{figure}[htb]\centering
 \scalebox{0.3}{\includegraphics{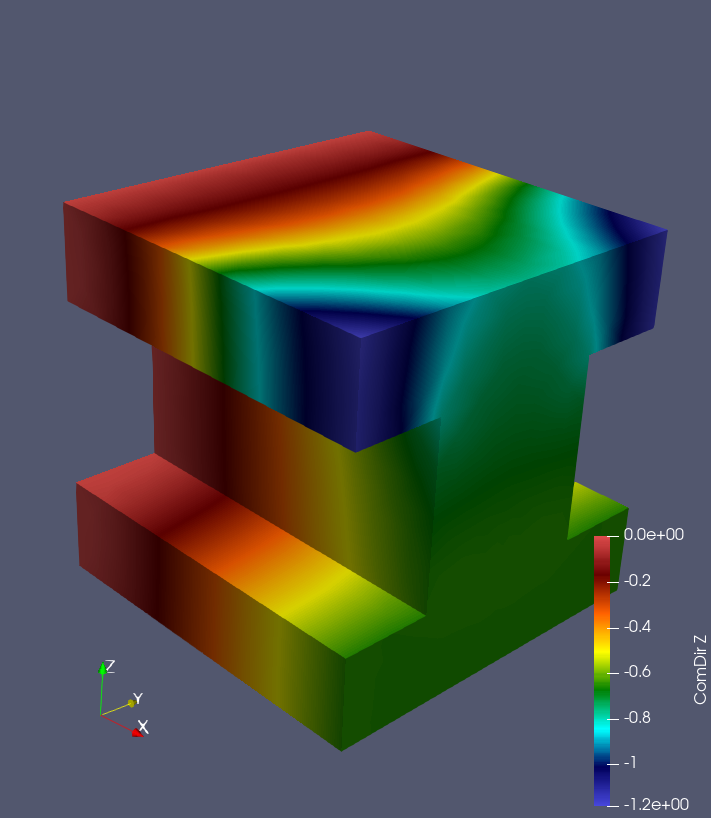}} 
 \caption{Deformation under loading in $z$-direction for ACA and mesh 1.}\label{fig:ACA1}
\end{figure}

The parameters used for ACA in Sect.~\ref{sec:six_one} remain unchanged.
Additionally, we use $\eps_\textnormal{BPCG} = 10^{-5}$ during the iterative
solution via the Bramble-Pasciak conjugate gradients method~\cite{bp88}.
The results for ACA are shown in Table~\ref{tab:Beam_ACA}.

      \begin{table}[H] \centering
   \scalebox{1.0}{
  \begin{tabular}{ r  r |r r r r r r r r r  }
    	&  		& \multicolumn{2}{c}{$V_{\Delta,h}$} 	&  \multicolumn{2}{c}{$V_{11}$}	&  \multicolumn{2}{c}{$V_{12}$}	&  \multicolumn{2}{c}{$V_{13}$}	& $V_{22}$ 	\\
      $N$		&	$M$  		&MB	& \%		 	& MB 	& \%	& MB 	& \%	& MB 	& \% 	& MB 	  \\ \hline 
  1\,664 	&834		&6.9	&65.2			&8.3	&78.3	&8.4	&79.8	&8.3	&78.7	&8.4			\\ [2pt]
  6\,656	&3\,330		&44.4	&26.2			&56.3	&33.3	&56.5	&33.4	&54.6	&32.3	&56.3			\\ [2pt]
 26\,624	&13\,314	&238.7 &8.8			&306.5 	&11.3	&300.8	&11.1	&285.3	&10.6	&305.1			\\ 
  \end{tabular}} \\
  \vspace*{0.4cm}
    \scalebox{1.0}{
  \begin{tabular}{  r  r |r r r r r r r r}
    	&     	& $V_{22}$& \multicolumn{2}{c}{$V_{23}$}	&  \multicolumn{2}{c}{$V_{33}$}	&  \multicolumn{2}{c}{$K_{\Delta,h}$} & time total  \\
      $N$ 		&	$M$ & \%		& MB 	& \% 	& MB 	& \% 	& MB 	& \% \\ \hline 
   1\,664& 834		&79.7&8.4	&79.4			&8.2	&77.2	&9.5		&90.1	&40.3	s	\\ [2pt]
   6\,656	&3\,330		&33.3&54.6	&32.3			&54.0	&31.9	&75.2		&44.9	&426.7	s	\\ [2pt]
  26\,624&13\,314	& 11.3&283.7 &10.5			&287.2 &10.6	&497.3	&18.4	&3\,714.7 s	\\ 
  \end{tabular}}
  \caption{Storage requirements of the approximations constructed via ACA and time consumption of solving the problem.} \label{tab:Beam_ACA}
  \end{table}

For BACA other parameters have to be chosen. The adaptive adjustment of the error tolerance in the Bramble-Pasciak CG is done according to condition~\eqref{eq:solvCond} with $\alpha = 10$.
The initial value of the accuracy in Bramble-Pasciak CG is $10^{-1}$.  The tolerance $\eps_\textnormal{BACA}$ is $10^{-4}$ and $\theta = 0.8$. The starting approximations of the respective $V$ operators are obtained by applying~$8$ (for the two coarsest grids) and $10$ (for the finest grid)
ACA steps. For the operator~$K$ the respective number of steps are~$4$ and~$6$.
Solving the Lam\'e equations via BACA with these parameters leads to the values shown in Table~\ref{tab:Beam_BACA}.

      \begin{table}[H] \centering
   \scalebox{1.0}{
  \begin{tabular}{ r | r |r r r r r r r r r  }
    	&  		& \multicolumn{2}{c}{$V_{\Delta,h}$} 	&  \multicolumn{2}{c}{$V_{11}$}	&  \multicolumn{2}{c}{$V_{12}$}	&  \multicolumn{2}{c}{$V_{13}$}	&  $V_{22}$ 	\\
      $N$  		&	$M$  		&MB	& \%		 	& MB 	& \%	& MB 	& \% 	& MB 	& \% 	& MB  \\ \hline 
  1\,664 	&834		&6.5	&61.0			&6.8	&64.2	&6.8	&64.4	&6.8	&63.9	&6.8			\\ [2pt]
  6\,656	&3\,330		&40.1	&23.7			&41.5	&24.5	&41.2	&24.4	&40.2	&23.8	&41.5			\\ [2pt]
 26\,624	&13\,314	&220.3	&8.1			&228.9 &8.5	&223.8	&8.3	&213.2	&7.9	&228.8			\\ 
  \end{tabular}} \\
  \vspace*{0.4cm}
    \scalebox{1.0}{
  \begin{tabular}{  r | r |r r r r r r r r}
     	&     	& $V_{22}$&  \multicolumn{2}{c}{$V_{23}$}	&  \multicolumn{2}{c}{$V_{33}$}	&  \multicolumn{2}{c}{$K_{\Delta,h}$} & time total  \\
       $N$	&	$M$	& \%	& MB 	& \% 	& MB 	& \% 	& MB 	& \% \\ \hline
   1\,664& 834		& 64.5&6.8	&64.0			&6.8	&63.9	&4.7		&44.4	&30.5 s			\\ [2pt]
   6\,656	&3\,330		& 24.6&40.2	&23.8			&40.5	&24.0	&27.3		&16.1	&215.8 s		\\ [2pt]
  26\,624&13\,314	& 8.5&212.7 &7.9			&218.3 &8.1	&180.4		&6.7	&2\,070.4 s		\\ 
  \end{tabular}}
  \caption{Storage, relative storage for the approximations constructed by BACA and time consumption of solving the problem after applying BACA in the case of Lam\'e equations.} \label{tab:Beam_BACA}
  \end{table}
  Compared to the results obtained from ACA, no significant differences can be observed when applying BACA to the operator~$V_{\Delta,h}$.
  The $V$ operators require only about 70--80\% of the storage needed for
  the approximations ge-nerated via ACA. Stronger benefits can be achieved for
  the~$K_{\Delta}$ operator. Here, the approximation using BACA requires only 50\% (for the coarsest grid) and 36\% (for the two finest grids) of the storage needed in the case of ACA. 
Table~\ref{tab:Beam_BACA} also shows advantages of BACA with respect to computational time.
While on the coarsest grid 75\% of the time need by ACA is consumed,
for the second finest grid the time can be reduced to 51\% and to 56\% for the finest grid considered.

\section{Conclusion} \label{sec:seven}
In this article, a new method for an adaptive and approximate computation of a matrix-vector multiplication was presented for the case of discretizations of integral operators. The goal was to adapt the approximation to the structure of the vector to be multiplied in order to reduce the storage requirements of the matrix as well as the computational time. Techniques known from adaptive mesh refinement were used in order to identify those blocks which are important for the error of the multiplication.

 After analyzing the convergence of the adaptive method, we focused on the Lamé equations as an application example. Therefore, the adaptation of the new method in the case of linear elasticity was discussed and performed for both
 approximating the system matrix on the left-hand side and approximating the action of operators on the right-hand side. In the numerical examples, the quality of the employed estimator, i.e.\ its reliability and efficiency, could be observed. The application of the
 new methods in case of a loaded beam with double-T shape resulted in less storage requirements and a significant reduction of the computation time compared to solving the considered problem using ACA. 
 
The underlying procedure is also applicable to other problems after minor adaptation as in the case of Lam\'e equations in this article. In future research it will be interesting to see how the algorithms will behave, for example, in the case of Stokes' equations.

\end{document}